\author{Guillaume Dumas}
\title{Superpermutation Matrices}
\date{}
\newtheorem{theorem}{Theorem}
\newtheorem{proposition}{Proposition}
\newtheorem{scholia}{Scholium}
\newtheorem{lemme}{Lemma}
\newtheorem{probleme}{Problem}
\theoremstyle{definition}
\newtheorem{definition}{Definition}
\newtheorem{remark}{Remark}
\ProvideTextCommand{\textasciitilde}{OT1}{\~{}}
\begin{document}
\maketitle
\begin{abstract}
Superpermutations are words over a finite alphabet containing every permutation as a factor. Finding the minimal length of a superpermutation is still an open problem \cite{egan1, envat}. In this article, we introduce superpermutations matrices. We establish a link between the minimal size of such a matrix and the minimal length of a universal word for the quotient of the symmetric group $S_n$ by an equivalence relation. We will then give non-trivial bounds on the minimal length of such a word and prove that the limit of their ratio when $n$ approaches infinity is 2. 
\end{abstract}

\section{Introduction}
In order to motivate the introduction of superpermutation matrices, we review the recent advances on a related problem, that of the minimal superpermutation.
\subsection{Background on Superpermutations}
A \emph{word} over an alphabet $A$ is a finite sequence of letters of $A$. If $w$ is a word, $\vert w \vert$ is its number of letters and $w(i)$ its $i^{th}$ letter. Let $[n]$ denote $\lbrace 1,...,n \rbrace$. A permutation $\pi$ of $S_n$ can be represented by a word $\pi(1)...\pi(n)$ over $[n]$.\\
A \emph{$n$-superpermutation} is a universal word for permutations of $S_n$ over $[n]$: it is a word over $[n]$ that contains each permutation of $S_n$ as a contiguous factor.\par
A trivial lower bound is easy to find: to get a new permutation, we need to add at least an element to the word. We need $n!$ permutation, and we start with $n$ letters for the first, so a superpermutation has length at least $n!+n-1$.\par
Ashlock and Tillotson \cite{ashtil} conjectured in a paper in 1993 that minimal length was $n!+(n-1)!+...+1!$. For example, we have 121 for $n=2$, 123121321 for $n=3$. Their conjecture follows from a recursive construction: given a $(n-1)$-superpermutation,
\begin{itemize}
\item Write permutations of size $n-1$ in their order of appearance
\item Duplicate each permutation and add n between the copies
\item Concatenate the different parts and eliminate overlap.
\end{itemize}

If m is the length of the starting $(n-1)$-superpermutation, Ashlock and Tillotson proved that the resulting $n$-superpermutation has length $m+n!$.\par
But this conjecture was disproved later. We need first to define the transition graph $G_n$: it is a graph with $S_n$ as vertices, and the weight of an edge from $\sigma_1$ to $\sigma_2$ is the number of letters of $u$ the smallest word such that $\sigma_2$ is a suffix of $\sigma_1u$. The weight function represent the number of letters to add to a permutation to make a new permutation appear as a factor in the resulting word. Then with these definitions, we get this result:
\begin{proposition}\label{prop1}
A path H that goes through every vertex can be turned into a superpermutation u of length $w(H)+n$. A superpermutation u can be turned into a path that goes through every vertex H of weight $\vert u \vert -n$.
\end{proposition}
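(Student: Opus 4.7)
My plan is to prove each implication by an explicit construction.

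For the \emph{forward} direction, let $H : \sigma_1 \to \sigma_2 \to \cdots \to \sigma_k$ be a walk in $G_n$ visiting every vertex, with total weight $w(H)$. I would build $u$ inductively: start with $u^{(1)} := \sigma_1$ (viewed as a word of length $n$), and at step $i$, knowing that the length-$n$ suffix of $u^{(i)}$ is $\sigma_i$, define $u^{(i+1)} := u^{(i)} v_i$ where $v_i$ is the shortest word making $\sigma_{i+1}$ a suffix of $\sigma_i v_i$. By definition of the edge weight in $G_n$, $|v_i| = w(\sigma_i \to \sigma_{i+1})$, so after $k-1$ extensions the word $u := u^{(k)}$ has length $n + \sum_i |v_i| = n + w(H)$. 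Each $\sigma_i$ occurs as a factor of $u$ (namely the length-$n$ suffix of $u^{(i)}$), and since $H$ spans $S_n$, the resulting $u$ is a superpermutation.

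For the \emph{reverse} direction, let $u$ be a superpermutation. I would first reduce to the case where the first and last $n$ letters of $u$ are themselves permutations; if not, letters of $u$ lying outside every permutation-factor can be stripped away without losing any permutation as a factor, yielding a shorter superpermutation for which the claim suffices. Then list the positions $1 = i_1 < i_2 < \cdots < i_k = |u| - n + 1$ at which a length-$n$ factor $\pi_j := u[i_j, \ldots, i_j + n - 1]$ is a permutation, and consider the walk $H : \pi_1 \to \pi_2 \to \cdots \to \pi_k$ in $G_n$, which visits every vertex. For each consecutive pair, the slice $u[i_j + n, \ldots, i_{j+1} + n - 1]$ of length $s_j := i_{j+1} - i_j$ realizes $\pi_{j+1}$ as a suffix of $\pi_j$ extended by $s_j$ letters, so $w(\pi_j \to \pi_{j+1}) \leq s_j$; summing telescopes to $w(H) \leq |u| - n$.

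The main obstacle is establishing the matching lower bound $w(\pi_j \to \pi_{j+1}) \geq s_j$. The natural argument is that any strictly shorter abstract transition of length $w < s_j$ would force $\pi_j$ and $\pi_{j+1}$ to share an overlap of $n-w$ letters, which in turn forces a permutation factor of $u$ to appear at a position strictly between $i_j$ and $i_{j+1}$, contradicting the choice of $i_{j+1}$ as the \emph{next} permutation position. Cleanly converting this abstract overlap into a concrete forced permutation factor inside $u$ — carefully tracking which letters are constrained by the overlap and which remain free — is the delicate combinatorial step; I expect it to be simplest after first reducing to a length-minimal superpermutation, in which every letter is optimally used and no slack is available. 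Granting this lower bound, $w(H) = |u| - n$, and combined with the forward construction one obtains the announced correspondence between spanning walks in $G_n$ and superpermutations.
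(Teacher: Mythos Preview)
The paper states Proposition~\ref{prop1} as background without proof, so there is no argument to compare against directly. Your forward construction is correct and standard. The gap is in the reverse direction: the lower bound $w(\pi_j\to\pi_{j+1})\ge s_j$ that you flag as ``delicate'' is in fact false in general. Take the $S_3$-superpermutation $u=12312121321$ of length~$11$: its consecutive permutation factors $312$ (starting at position~$3$) and $213$ (starting at position~$7$) have $s_j=4$, yet $w(312\to 213)=2$, since $31213$ already has $213$ as a suffix. The abstract one-letter overlap between $312$ and $213$ does not force any intermediate permutation inside $u$; the length-$3$ factors at positions $4,5,6$ are $121,212,121$. Thus listing the permutation factors of this $u$ in order yields a spanning walk of weight $1+1+2+1+1=6$, strictly less than $|u|-n=8$. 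Your trimming step does not touch this example, and reducing to a length-minimal superpermutation changes $|u|$, so it would not establish the claim for the original word.

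The repair is short once you keep only the inequality $w(H)\le |u|-n$ that your argument does establish: every vertex of $G_n$ has an outgoing weight-$1$ edge (to the cyclic shift $\pi(2)\cdots\pi(n)\pi(1)$), so any spanning walk can be extended by weight-$1$ steps until its total weight is exactly $|u|-n$. For the intended application---matching minimal superpermutation length with minimal spanning-walk weight in $G_n$---only the inequality is actually needed.
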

Notice that we don't talk about Hamiltonian paths, because we don't know if a permutation will appear only once in a minimal superpermutation.\\
With this, Houston \cite{houston1} used algorithms for the travelling salesman problem to find a $6$-superpermutation of length 872, while the conjectured length was 873.\par
The transition graph also helped provide bounds for the minimal length. An anonymous 4chan user \cite{4chan} proved the following result by studying cycles using only weight 1 edges, and cycles with weight-2 edges.
\begin{theorem}\label{th1}
A $n$-superpermutation has length at least $n!+(n-1)!+(n-2)!+n-3$.
\end{theorem}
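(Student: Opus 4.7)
By Proposition~\ref{prop1}, it suffices to show that every walk $H$ in $G_n$ that covers all $n!$ vertices satisfies $w(H) \ge n! + (n-1)! + (n-2)! - 3$. The plan is to extract two hierarchical levels of low-weight structure from $G_n$ and then carefully account for the weight contributed at each level.

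First, I would analyse the low-weight edges. A direct calculation shows that the unique weight-$1$ successor of $\sigma = a_1 \cdots a_n$ is the cyclic shift $\rho(\sigma) = a_2 \cdots a_n a_1$, so weight-$1$ edges partition $S_n$ into $(n-1)!$ disjoint directed cycles of length $n$, which I call \emph{1-cycles}. Each vertex has exactly two weight-$2$ successors: an internal one, $\rho^2(\sigma)$ (same 1-cycle), and an external one, $\phi(\sigma) := a_3 \cdots a_n a_2 a_1$ (different 1-cycle). I would then study the composition $\psi := \phi \circ \rho^{-1}$; a short calculation yields $\psi(a_1 \cdots a_n) = a_2 a_3 \cdots a_{n-1} a_1 a_n$, a cyclic shift of the first $n-1$ coordinates fixing the last. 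Hence $\psi$ has order $n-1$, and because $\psi$ fixes the last letter while the $n$ vertices of a single 1-cycle have pairwise distinct last letters, no two distinct elements of a $\psi$-orbit can share a 1-cycle. The $n-1$ elements of each orbit therefore occupy $n-1$ distinct 1-cycles, so covering all $(n-1)!$ 1-cycles requires visiting at least $(n-2)!$ distinct $\psi$-orbits.

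Next, I would decompose a minimum-weight covering walk $H$ (which may be assumed Hamiltonian) into maximal runs of consecutive weight-$1$ edges, called \emph{segments}. If there are $k$ segments, they use $n! - k$ weight-$1$ edges in total, leaving $k - 1$ bridges of weight $\ge 2$; since every 1-cycle must be visited, $k \ge (n-1)!$, which already gives $w(H) \ge n! + k - 2 \ge n! + (n-1)! - 2$. In the extremal case $k = (n-1)!$, each 1-cycle is covered by a single segment: the walk enters at some $v$ and exits at $\rho^{-1}(v)$. A weight-$2$ bridge from this exit must be external (the internal one would return to the same 1-cycle), so the next entry is $\phi(\rho^{-1}(v)) = \psi(v)$ and thus lies in the same $\psi$-orbit. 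Since each orbit touches only $n-1$ of the $(n-1)!$ 1-cycles, at least $(n-2)! - 1$ bridges must change $\psi$-orbit; these cannot have weight $2$ and therefore have weight $\ge 3$, adding $(n-2)! - 1$ units of excess weight. Combining,
\[
w(H) \ge \bigl(n! - (n-1)!\bigr) + 2\bigl((n-1)! - 1\bigr) + \bigl((n-2)! - 1\bigr) = n! + (n-1)! + (n-2)! - 3,
\]
and Proposition~\ref{prop1} then yields the claimed bound on $|u|$.

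The subtle step is the extension to walks with $k > (n-1)!$, in which some 1-cycle is split across multiple segments. Each extra segment adds one weight-$\ge 2$ bridge (contributing $+1$ through the $n! + k - 2$ estimate), but it could relax the $\psi$-orbit constraint and reduce the number of forced weight-$\ge 3$ bridges. The main obstacle is an exchange argument showing that these two effects balance, so that the minimum of $w(H)$ is attained at $k = (n-1)!$; concretely, one wants to argue that merging two segments of a shared 1-cycle decreases $k$ by $1$ while removing at most one bridge of weight $\ge 2$. Once this exchange step is in place, the weight bound on $H$, and hence the length bound on $|u|$, follows in full generality.
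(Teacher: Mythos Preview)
The paper does not itself prove Theorem~\ref{th1}; it is quoted as prior work, with only the hint that the argument proceeds ``by studying cycles using only weight-1 edges, and cycles with weight-2 edges.'' Your proposal is in exactly this spirit, and your treatment of the extremal case $k=(n-1)!$ is correct: the computations of $\rho$, $\phi$ and $\psi=\phi\circ\rho^{-1}$ are right, each $\psi$-orbit meets $n-1$ distinct 1-cycles, and the displayed arithmetic gives the claimed bound.

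The gap you flag for $k>(n-1)!$ is real, and the exchange argument you sketch is not how the published proof closes it, nor is it clearly salvageable. Merging two segments of a common 1-cycle is not a local move: the segments can be far apart in the walk, and splicing them forces a rerouting whose effect on the count of weight-$\ge 3$ bridges you have no handle on. The structural reason your argument breaks is that your $\psi$-orbits are attached to \emph{entry vertices} rather than to 1-cycles; as soon as a 1-cycle acquires more than one entry point this bookkeeping loses its meaning. The published proof avoids any case split on $k$ by an inductive count along the word, in the style of the paper's own Theorem~\ref{th6}: for each permutation one asks whether its first occurrence is immediately preceded by a permutation window, and if not, whether the window two positions back is one. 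Each 1-cycle is shown to contain a permutation failing the first test, and then---this is the delicate step---at least $(n-2)!$ of those also fail the second; every failure forces an extra letter, and the bound follows uniformly without ever assuming $k=(n-1)!$.
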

Finally, Egan \cite{egan1} adapted a paper by Williams \cite{williams1} on the Hamiltonicity of a Cayley digraph to find a path using only weight-1 and weight-2 edges, of known weight, giving the following result.
\begin{theorem}\label{th2}
There is a superpermutation of length $n!+(n-1)!+(n-2)!+(n-3)!+n-3$.
\end{theorem}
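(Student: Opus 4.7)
The plan is to invoke Proposition \ref{prop1} in its forward direction: it suffices to exhibit a path $H$ through every vertex of $G_n$ of weight $n!+(n-1)!+(n-2)!+(n-3)!-3$, since this converts to a superpermutation of length $w(H)+n$ matching the statement. I would restrict attention to edges of weight $1$ or $2$; then a path with $n!-1$ edges using $a$ weight-$1$ and $b$ weight-$2$ edges satisfies $a+b=n!-1$ and has total weight $n!-1+b$, so the target reduces to constructing such a path with exactly $b=(n-1)!+(n-2)!+(n-3)!-2$ weight-$2$ edges.

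Next I would unpack the orbit hierarchy that these two edge types induce on $S_n$. Weight-$1$ edges are cyclic left shifts $\sigma(1)\cdots\sigma(n) \mapsto \sigma(2)\cdots\sigma(n)\sigma(1)$, producing $(n-1)!$ orbits of size $n$ (the \emph{$1$-orbits}). The weight-$2$ edges that are not just two weight-$1$ steps send $\sigma(1)\cdots\sigma(n)$ to $\sigma(3)\cdots\sigma(n)\sigma(2)\sigma(1)$, and they descend to a well-defined permutation on the set of $1$-orbits whose own orbits --- the \emph{$2$-orbits} --- each group $n-1$ neighbouring $1$-orbits, so there are $(n-2)!$ of them. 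Iterating once more, compositions of weight-$2$ jumps group the $2$-orbits into $(n-3)!$ \emph{$3$-orbits} of size $n-2$. This three-tier scaffold is the combinatorial heart of the construction.

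I would then adapt Williams' scheme \cite{williams1} for Hamiltonian paths in Cayley digraphs to weave a path through all three tiers. The overall strategy is: sweep each $1$-orbit by $n-1$ weight-$1$ edges; stitch the $n-1$ $1$-orbits of a $2$-orbit together with Williams' canonical pattern of weight-$2$ jumps; stitch the $n-2$ $2$-orbits of a $3$-orbit together with additional weight-$2$ jumps; finally chain the $(n-3)!$ top-level $3$-orbits with closing weight-$2$ edges. The technical subtlety is that a clean level-by-level traversal ``gets stuck'' at each level transition and must be bridged by extra weight-$2$ edges; a telescoping count of these corrections, once organised properly, lands at exactly $b=(n-1)!+(n-2)!+(n-3)!-2$.

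The hard part will be carrying out this adaptation faithfully. Williams' original construction targets a Cayley digraph of a cyclic group with two specific generators, and one must verify that the weight-$1$ and weight-$2$ jumps of $G_n$ play the role of those generators well enough for his theorem to port over. In particular one must check (i) that the three-tier orbit decomposition is clean and compatible with Williams' admissibility hypotheses, (ii) that his splicing never forces a weight-$\geq 3$ edge, and (iii) that the telescoping over the three tiers reproduces $(n-1)!+(n-2)!+(n-3)!-2$ on the nose, with no off-by-one in the additive $+n-3$ constant. Resolving these three points is where the bulk of the work lies, and this is precisely where Egan's construction advances beyond the earlier Ashlock--Tillotson recursive scheme.
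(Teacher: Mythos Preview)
The paper does not supply its own proof of Theorem~\ref{th2}: it is quoted as background, attributed to Egan~\cite{egan1} as an adaptation of Williams'~\cite{williams1} Hamiltonicity result for a Cayley digraph, and the paper simply records the length formula and remarks that it is optimal among paths using only weight-$1$ and weight-$2$ edges. So there is nothing in the paper to compare your argument against line by line.

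That said, your outline is faithful to the approach the paper alludes to. The reduction via Proposition~\ref{prop1} to a Hamiltonian path of weight $n!+(n-1)!+(n-2)!+(n-3)!-3$, the restriction to weight-$1$ and weight-$2$ edges with the target count $b=(n-1)!+(n-2)!+(n-3)!-2$, and the three-tier orbit hierarchy (rotation classes, then $2$-cycles, then a further level) are exactly the ingredients Egan uses. Your description of the weight-$1$ shift and of the ``genuine'' weight-$2$ edge $\sigma(1)\cdots\sigma(n)\mapsto\sigma(3)\cdots\sigma(n)\sigma(2)\sigma(1)$ is correct, as are the orbit cardinalities $(n-1)!$, $(n-2)!$, $(n-3)!$.

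What you have written, however, is a plan rather than a proof: you explicitly flag points (i)--(iii) as work still to be done, and those are precisely the nontrivial parts of Egan's paper. In particular, the claim that the weight-$2$ map ``descends to a well-defined permutation on the set of $1$-orbits'' already requires care (one must choose representatives consistently), and the exact bookkeeping that yields $b=(n-1)!+(n-2)!+(n-3)!-2$ with no off-by-one is delicate. If your intent is to match the paper, a one-line citation to~\cite{egan1} is all that is warranted here; if your intent is to give a self-contained proof, the sketch needs to be filled in substantially before it stands on its own.
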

It can be shown that it is the optimal length using only weight-1 and weight-2 edges.\par

Even though graph theory helped find good bounds, finding superpermutations of minimal length is still an open problem. But a generalisation to different alphabets can change the results \cite{envat}.

\subsection{De Bruijn words and Universal Cycles}
Let us consider an alphabet $[N]$ where $N\geq n$. We will say that two words u and v of length n are \emph{order-isomorphic} if $u(i)>u(j)\Longleftrightarrow v(i)>v(j) \, \forall 1\leq i,j\leq n$. Then any word $u$ over $[N]$ order-isomorphic to the standard representation of the permutation $\pi$ is a representation over $[N]$. So, we can consider the problem of a \emph{universal word} for permutations over $[N]$, that is to say find a word $w$ which contains a factor p order-isomorphic to $\pi$ $\forall \pi \in S_n$.\par
We have seen that finding the minimal length over $[n]$ was still an open question. But the addition of a new symbol to the alphabet changes things: over $[n+1]$ and $\mathbb{N}^*$, we can find the minimal length of a universal word for permutations.\par
The trivial lower bound $n!+n-1$ is independent of the alphabet. If we can found a universal word of this length $\mathrm{B_{inf,1}}(n)$ for all n, it would be the minimal length.\par

To do so, we need to define De Bruijn words. A \emph{De Bruijn word of order n} over an alphabet $[k]$ is a word of length $k^n$ such that every word of $[k]^n$ can be found exactly once as a cyclic factor (which means as a factor in the word $w(1)....w(k^n)w(1)w(2)...w(n-1)$). De Bruijn showed the existence of such a word \cite{bruijn}.\\

In 1992 Chung, Diaconis and Graham \cite{ucycle} looked for generalisations of this sequences for other objects, such as permutations. \emph{A universal cycle (u-cycle)} for permutations of length n is a word $w$ of length $n!$ over some alphabet $[N]$ such that every permutation is order-isomorphic to a cyclic factor of $w$.\par
If such a u-cycle exists, then $w(1)...w(n!)w(1)...w(n-1)$ is a solution for the problem of universal words over the alphabet $[N]$, and over $\mathbb{N}^*$ by inclusion.\par

\begin{theorem}\label{th3}
For all n, there is u-cycle over the alphabet $[6n]$ for permutations of length n.
\end{theorem}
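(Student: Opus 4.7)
The plan is to reduce the problem to an Eulerian circuit in a suitable transition graph, and then realize the resulting cyclic sequence of permutations as a concrete cyclic word in $[6n]$.

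First, I would define the transition graph $G_n$ on vertex set $S_{n-1}$: for each $\pi\in S_n$, include one directed edge from the order-type of $\pi(1)\ldots\pi(n-1)$ to the order-type of $\pi(2)\ldots\pi(n)$. Every vertex has in-degree and out-degree equal to $n$ (one edge for each possible rank of the symbol entering or leaving the window), and $G_n$ is easily seen to be strongly connected, so it admits an Eulerian circuit. Traversing this circuit produces a cyclic sequence $(\pi^{(1)},\ldots,\pi^{(n!)})$ enumerating $S_n$ such that for every $i$, the suffix of length $n-1$ of $\pi^{(i)}$ and the prefix of length $n-1$ of $\pi^{(i+1)}$ are order-isomorphic.

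Next, I would inductively build a cyclic word $w=w_1\ldots w_{n!}\in[6n]^{n!}$ so that every length-$n$ cyclic window $w_iw_{i+1}\ldots w_{i+n-1}$ is order-isomorphic to $\pi^{(i)}$. After fixing an initial representative of $\pi^{(1)}$ whose letters are spread evenly across $[6n]$, each new letter $w_{i+n-1}$ is forced to lie in one specific open interval of $[6n]$ — the gap between two consecutive sorted values of $w_i,\ldots,w_{i+n-2}$, possibly unbounded on one side — determined by the rank read off from $\pi^{(i)}$. A natural rule is to place each new value at the midpoint of its mandated interval, so as to keep subsequent intervals as large as possible.

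The main obstacle is showing that this rule never fails, i.e.\ that $6n$ integers always suffice for the prescribed interval to be non-empty. The average of the $n$ gaps among $n-1$ active letters is larger than $5$, but an individual gap can shrink much faster than that, so a purely local argument does not work: the coefficient $6$ has to come from a careful spacing invariant on the $n-1$ active letters, combined with the freedom to choose the Eulerian circuit so that no short stretch of consecutive transitions repeatedly collapses the same narrow interval. A final, routine step is to adjust the initial representative of $\pi^{(1)}$ so that the cyclic wrap-around indeed realizes the patterns $\pi^{(n!-n+2)},\ldots,\pi^{(n!)}$ correctly, which can always be arranged by a translation or relabelling of the starting window inside $[6n]$.
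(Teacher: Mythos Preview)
The paper does not supply its own proof of Theorem~\ref{th3}; it is quoted as a background result from the universal-cycle literature (Chung, Diaconis and Graham), with Johnson's sharpening recorded separately as Theorem~\ref{th4}. So there is no in-paper argument to compare against, and your proposal has to stand on its own.

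Your outline has the right architecture, and its first half is essentially the standard one: the transition multigraph on order types of $(n-1)$-tuples, with one directed edge per $\pi\in S_n$, is balanced (in- and out-degree $n$ at every vertex) and strongly connected, so an Eulerian circuit exists and yields a cyclic listing $\pi^{(1)},\ldots,\pi^{(n!)}$ of $S_n$ with the required length-$(n-1)$ overlap.

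The genuine gap is the realization step. You propose the midpoint rule and then assert that ``the coefficient $6$ has to come from a careful spacing invariant on the $n-1$ active letters, combined with the freedom to choose the Eulerian circuit so that no short stretch of consecutive transitions repeatedly collapses the same narrow interval.'' Neither ingredient is actually supplied. Under the midpoint rule, $k$ consecutive insertions into the same gap shrink it by a factor of roughly $2^k$, so staying inside $[6n]$ would require that along the chosen circuit no gap is targeted more than about $\log_2 n$ times before some active letter leaves it; you give no construction of an Eulerian circuit with this property, nor an invariant that would certify it. The published proofs of a linear alphabet bound do not use a midpoint heuristic at all: they build the circuit in explicit blocks (for instance, runs of $n$ permutations obtained by repeatedly inserting at an extreme rank) so that the actual letters can be written down directly, and the constant in front of $n$ comes from that block structure rather than from a gap-spacing argument. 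Your final ``routine'' wrap-around step is also not routine: after $n!$ greedy midpoint choices the last $n-1$ letters are already determined, and a translation or relabelling of the \emph{starting} window does not in general force the closing windows to realize $\pi^{(n!-n+2)},\ldots,\pi^{(n!)}$. As written, the proposal names the hard step correctly but does not carry it out.
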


Johnson \cite{johnson} showed a better result, proving that $n!+n-1$ is the minimal length over $[n+1]$.

\begin{theorem}\label{th4}
For all n, there is u-cycle over the alphabet $[n+1]$ for permutations of length n.
\end{theorem}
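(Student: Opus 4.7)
The plan is to translate Theorem~\ref{th4} into a Hamiltonian cycle problem in a well-chosen digraph on $S_n$. A u-cycle over $[n+1]$ of length $n!$ is precisely a cyclic word whose $n!$ sliding windows of length $n$ are each order-isomorphic to a distinct permutation of $S_n$. Each such window uses $n$ of the $n+1$ available letters, leaving exactly one \emph{missing} letter. When the window slides one step to the right, the dropped leftmost letter is known and the appended letter must be distinct from the $n-1$ surviving letters, so there are exactly two legal choices: reuse the dropped letter, or use the previously missing one.

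I would first define a digraph $G_n$ with vertex set $S_n$ (identified with the order-isomorphism classes of valid windows) and with two out-edges per vertex, one for each legal slide. The two slides translate, via order-isomorphism, into two explicit maps on $S_n$: the first is the cyclic left rotation $\pi_1 \pi_2 \cdots \pi_n \mapsto \pi_2 \cdots \pi_n \pi_1$, and the second sends $\pi$ to the permutation obtained by rotating and then replacing the last entry with a new value whose rank among the others is dictated by the position of the previously missing letter. Either way $G_n$ is $2$-in/$2$-out regular, and, in the spirit of Proposition~\ref{prop1}, a u-cycle of length $n!$ corresponds exactly to a Hamiltonian cycle in $G_n$.

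The central step, and the main obstacle, is to exhibit such a Hamiltonian cycle. The rotation edges alone partition $S_n$ into the $(n-1)!$ orbits of cyclic rotation, each of length $n$, so the strategy is to splice these orbits together using one of the non-rotation edges per orbit. Following Johnson, I would impose a canonical ordering on orbits (for instance by their lexicographic minimum representative), designate on each orbit a single ``exit vertex'' at which one follows the second edge instead of the rotation, and verify that the resulting traversal jumps to a fresh orbit and eventually returns to its starting permutation, visiting every vertex exactly once. Verifying this is the delicate part: one must produce an invariant showing that the exit rule is well-defined (no two orbits share an exit target), exhaustive (every orbit is reached), and consistent (the chain of non-rotation jumps closes up after exactly $(n-1)!$ steps).

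I expect this verification to be the main obstacle, since Cayley-type digraphs are not automatically Hamiltonian and the correct exit rule typically has to be justified either by an induction on $n$ or by an explicit algorithmic description. The fact that out-degree is exactly $2$ — itself a consequence of restricting to the minimal alphabet $[n+1]$ — is precisely what keeps the patching combinatorially tractable; over a larger alphabet the same argument would leave too many legal successors to control.
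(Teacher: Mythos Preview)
The paper does not prove Theorem~\ref{th4} at all: it is quoted as Johnson's result \cite{johnson}, with no argument given. So there is nothing in the paper to compare your proposal against beyond the bare citation.

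That said, your outline is faithful to Johnson's actual strategy. The reduction to a Hamiltonian cycle in a $2$-in/$2$-out digraph on $S_n$, the identification of the two legal slides (reuse the dropped letter versus insert the missing one), the observation that the rotation edges alone decompose $S_n$ into $(n-1)!$ cycles of length $n$, and the idea of splicing these cycles by replacing one rotation edge per orbit with the alternative edge --- all of this matches Johnson's framework.

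The genuine gap is the one you yourself flag: you do not specify the exit rule, and you do not verify that it produces a single Hamiltonian cycle. This is not a technicality but the entire content of the theorem. Johnson's construction does not proceed by ``impose a canonical ordering on orbits by lexicographic minimum and pick an exit vertex'' in the loose way you describe; he gives an explicit rule (based on where the symbol $n$ sits in the window and on a recursively defined order on $S_{n-1}$) and then proves, by a careful induction on $n$, that the resulting walk is Hamiltonian. Your proposal stops exactly at the point where the real work begins: writing down a concrete exit rule and proving the three properties you list (injective targets, exhaustiveness, closure after $(n-1)!$ jumps). Without that, what you have is a correct reformulation of the problem together with a plausible heuristic, not a proof.
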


\section{Superpermutations Matrices}
\subsection{De Bruijn Torus and Generalisation}

Let $m,n$ be two positive integers and $A$ an alphabet. A \emph{De Bruijn Torus} for $(m,n,A)$ is a toric matrix (we identify up and down, as well as left and right) in which we can find every matrix of $\mathcal{M}_{m,n}(A)$ as block, once and only once.
In the same way than Chung, Diaconis et Graham generalised De Bruijn in universal cycles, it is natural to ask if given $F$ a set of objects that may be represented by matrices, we can determine the smallest toric matrix that contain every element of $F$ as a block. More precisely, we shall be interested in the case where $F$ is the set of permutations $S_n$.\par
\begin{definition}\label{def1}
Let $(e_1,...,e_n)$ be the standard basis of $\mathbb{R}^n$. Let $\pi\in S_n$, the associated permutation matrix is $M(\pi)=(\delta_{i,\sigma(j)})_{1\leq i,j \leq n}=(e_{\pi(1)},...,e_{\pi(n)})$. The columns of the matrix are the vectors of the base reordered according to the permutation.
\end{definition}

\begin{proposition}\label{prop2}
The map $M:\pi \mapsto M(\pi)$ is an injective group morphism into $GL_n(\mathbb{R})$. We call $S_n^{Mat}$ the image of $S_n$ by M, it is a subgroup of $GL_n(\mathbb{R})$ isomorphic to $S_n$. Hence M is a faithful representation of $S_n$.
\end{proposition}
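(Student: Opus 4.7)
The plan is to verify directly the three claims: (i) $M(\pi)$ is invertible for every $\pi \in S_n$, (ii) $M$ is a group morphism, and (iii) $M$ is injective. Everything else in the statement (the image being a subgroup isomorphic to $S_n$, and $M$ being a faithful representation) follows formally from these three facts together with the first isomorphism theorem.

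For invertibility, I would use the description of $M(\pi)$ given in the definition: its $j$-th column is exactly $e_{\pi(j)}$. Since $\pi$ is a bijection of $[n]$, the family $(e_{\pi(1)}, \ldots, e_{\pi(n)})$ is a reordering of the standard basis, hence a basis of $\mathbb{R}^n$. Therefore $M(\pi) \in GL_n(\mathbb{R})$; one can even note that $\det M(\pi) = \varepsilon(\pi) \in \{-1,+1\}$, although this level of detail is not required.

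For the morphism property, I would compute a single matrix coefficient using the Kronecker delta formula. For $\pi, \sigma \in S_n$ and $1 \leq i, j \leq n$,
\begin{equation*}
(M(\pi) M(\sigma))_{i,j} = \sum_{k=1}^{n} \delta_{i,\pi(k)} \, \delta_{k,\sigma(j)} = \delta_{i,\pi(\sigma(j))} = M(\pi \circ \sigma)_{i,j},
\end{equation*}
where the middle equality uses that the sum collapses to the single term $k = \sigma(j)$. This gives $M(\pi \circ \sigma) = M(\pi) M(\sigma)$. Clearly $M(\mathrm{id}) = I_n$, so $M$ is indeed a group morphism from $S_n$ into $GL_n(\mathbb{R})$.

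For injectivity, I would argue column by column: if $M(\pi) = M(\sigma)$, then comparing $j$-th columns gives $e_{\pi(j)} = e_{\sigma(j)}$, and by linear independence of the standard basis this forces $\pi(j) = \sigma(j)$ for all $j$, hence $\pi = \sigma$. Equivalently, one checks that the kernel is trivial. Having established injectivity, the image $S_n^{Mat} = M(S_n)$ is a subgroup of $GL_n(\mathbb{R})$, and the corestriction $S_n \to S_n^{Mat}$ is a bijective morphism, hence an isomorphism. A faithful representation is by definition a morphism into some $GL_n$ with trivial kernel, so the final sentence is immediate. No step here is a real obstacle; the only thing to be careful about is applying the definition $M(\pi)_{i,j} = \delta_{i,\pi(j)}$ in the right order so that composition on the group side matches multiplication on the matrix side, rather than producing $M(\sigma \circ \pi)$ by accident.
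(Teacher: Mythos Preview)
Your proof is correct. The paper actually states this proposition without proof, treating it as a well-known fact about permutation matrices; your write-up supplies exactly the standard verification (invertibility from the columns being a permuted basis, the morphism property via the Kronecker-delta computation, and injectivity by reading off columns), and you correctly flag the one place where care is needed, namely the order of composition.
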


\begin{definition}\label{def2}
Let T be a toric matrix of $\mathcal{M}_{m,p}(A)$, we say that $M\in \mathcal{M}_n(A)$ is in T as a block, and we note $M\in T$, if $\,\exists i_0\leq m, j_0\leq p$ such that $(T_{i_0+i,j_0+j})_{1\leq i,j\leq n}=M$.\\
Let E be a set of same-sized square matrices, we note $E\subset T$ if $\forall M \in E, M\in T$.
\end{definition}

\begin{definition}
A \emph{superpermutations matrices} is a  toric matrices $T\in M_{m,p}(\{0,1\})$ such that $S_n\subset T$.
\end{definition}
The question we ask ourselves is whether we can found a superpermutation matrix of minimal length. But how can we define ``of minimal size'' ? Notice that for n=1, we have $\begin{pmatrix}
1
\end{pmatrix}$ and for n=2, $\begin{pmatrix}
1 & 0\\
0 & 1
\end{pmatrix}$ that will be the smallest matrix, no matter which notion of ``size'' we consider.\\
But for n=3, we have $\begin{pmatrix}
1 & 0 & 0\\
0 & 1 & 0\\
0 & 0 & 1\\
0 & 1 & 0
\end{pmatrix}\in M_{4,3}(\{0,1\})$ and $\begin{pmatrix}
1 & 0 & 0 & 0\\
0 & 1 & 0 & 1\\
0 & 0 & 1 & 0
\end{pmatrix}\in M_{3,4}(\{0,1\})$, yet no matrix of $M_{4,4}(\{0,1\})$ works, and we need to go to 5 to find a square matrix such that $S_3 \subset \begin{pmatrix}
1 & 0 & 0 & 0 & 1\\
0 & 1 & 0 & 1 & 0\\
0 & 0 & 1 & 0 & 0\\
1 & 0 & 0 & 0 & 1\\
0 & 1 & 0 & 1 & 0
\end{pmatrix}$. Furthermore, it appears difficult to see how to place the different blocks to minimize the size without trying to fix the number of rows or columns to $n$. Hence, we restrict the study to three different problems that seem interesting: minimization in terms of rows, of columns and as a square.
\vskip 1 cm
\begin{minipage}[c]{0.30\linewidth}

\begin{tikzpicture}
\tikzset{bloc/.style={fill =cyan, fill opacity=0.2}};
\matrix(M) [matrix of math nodes, left delimiter=(,
right delimiter=), column sep=2mm, row sep=2mm]
{ 1 & 0 & 0 & 0 \\
0 & 1 & 0 & 1  \\
 0 & 0 & 1 & 0  \\ };
\draw [bloc] (M-1-1.north west) rectangle (M-3-3.south east);
\node[below=1mm] at (M.south) {$123$};
\end{tikzpicture}

\vskip 1 cm

\begin{tikzpicture}
\tikzset{bloc/.style={fill =cyan,fill opacity=0.2}};
\matrix(M) [matrix of math nodes, left delimiter=(,
right delimiter=), column sep=2mm, row sep=2mm]
{ 1 & 0 & 0 & 0 \\
0 & 1 & 0 & 1  \\
 0 & 0 & 1 & 0  \\ };
\fill [bloc] (M-1-1.north west) rectangle (M-3-1.south east);
\draw (M-1-1.north west) rectangle (M-1-1.north east);
\draw (M-1-1.north east) rectangle (M-3-1.south east);
\draw (M-3-1.south west) rectangle (M-3-1.south east);
\fill [bloc] (M-1-3.north west) rectangle (M-3-4.south east);
\draw (M-1-3.north west) rectangle (M-1-4.north east);
\draw (M-1-3.north west) rectangle (M-3-3.south west);
\draw (M-3-3.south west) rectangle (M-3-4.south east);
\node[below=1mm] at (M.south) {$321$};
\end{tikzpicture}

\end{minipage}\hfill
\begin{minipage}[c]{0.30\linewidth}

\begin{tikzpicture}
\tikzset{bloc/.style={fill =cyan, fill opacity=0.2}};
\matrix(M) [matrix of math nodes, left delimiter=(,
right delimiter=), column sep=2mm, row sep=2mm]
{ 1 & 0 & 0 & 0 \\
0 & 1 & 0 & 1  \\
 0 & 0 & 1 & 0  \\ };
\fill [bloc] (M-1-1.north west) rectangle (M-1-3.south east);
\fill [bloc] (M-2-1.north west) rectangle (M-3-3.south east);
\draw (M-1-1.north west) rectangle (M-1-1.south west);
\draw (M-1-1.south west) rectangle (M-1-3.south east);
\draw (M-1-3.north east) rectangle (M-1-3.south east);
\draw (M-2-1.north west) rectangle (M-3-1.south west);
\draw (M-2-1.north west) rectangle (M-2-3.north east);
\draw (M-2-3.north east) rectangle (M-3-3.south east);
\node[below=1mm] at (M.south) {$312$};
\end{tikzpicture}

\vskip 1 cm

\begin{tikzpicture}
\tikzset{bloc/.style={fill =cyan,fill opacity=0.2}};
\matrix(M) [matrix of math nodes, left delimiter=(,
right delimiter=), column sep=2mm, row sep=2mm]
{ 1 & 0 & 0 & 0 \\
0 & 1 & 0 & 1  \\
 0 & 0 & 1 & 0  \\ };
\fill [bloc] (M-1-1.north west) rectangle (M-1-1.south east);
\draw (M-1-1.north east) rectangle (M-1-1.south east);
\draw (M-1-1.south west) rectangle (M-1-1.south east);
\fill [bloc] (M-2-1.north west) rectangle (M-3-1.south east);
\draw (M-2-1.north west) rectangle (M-2-1.north east);
\draw (M-2-1.north east) rectangle (M-3-1.south east);
\fill [bloc] (M-1-3.north west) rectangle (M-1-4.south east);
\draw (M-1-3.north west) rectangle (M-1-3.south west);
\draw (M-1-4.south east) rectangle (M-1-3.south west);
\fill [bloc] (M-2-3.north west) rectangle (M-3-4.south east);
\draw (M-2-3.north west) rectangle (M-2-4.north east);
\draw (M-2-3.north west) rectangle (M-3-3.south west);
\node[below=1mm] at (M.south) {$213$};
\end{tikzpicture}

\end{minipage}\hfill
\begin{minipage}[c]{0.30\linewidth}

\begin{tikzpicture}
\tikzset{bloc/.style={fill =cyan,fill opacity=0.2}};
\matrix(M) [matrix of math nodes, left delimiter=(,
right delimiter=), column sep=2mm, row sep=2mm]
{ 1 & 0 & 0 & 0 \\
0 & 1 & 0 & 1  \\
 0 & 0 & 1 & 0  \\ };
\fill [bloc] (M-1-1.north west) rectangle (M-2-3.south east);
\fill [bloc] (M-3-1.north west) rectangle (M-3-3.south east);
\draw (M-1-1.north west) rectangle (M-2-1.south west);
\draw (M-2-1.south west) rectangle (M-2-3.south east);
\draw (M-1-3.north east) rectangle (M-2-3.south east);
\draw (M-3-1.north west) rectangle (M-3-1.south west);
\draw (M-3-1.north west) rectangle (M-3-3.north east);
\draw (M-3-3.north east) rectangle (M-3-3.south east);
\node[below=1mm] at (M.south) {$231$};
\end{tikzpicture}

\vskip 1 cm

\begin{tikzpicture}
\tikzset{bloc/.style={fill =cyan,fill opacity=0.2}};
\matrix(M) [matrix of math nodes, left delimiter=(,
right delimiter=), column sep=2mm, row sep=2mm]
{ 1 & 0 & 0 & 0 \\
0 & 1 & 0 & 1  \\
 0 & 0 & 1 & 0  \\ };
\fill [bloc] (M-1-1.north west) rectangle (M-2-1.south east);
\draw (M-1-1.north east) -- (M-2-1.south east);
\draw (M-2-1.south west) -- (M-2-1.south east);
\fill [bloc] (M-3-1.north west) rectangle (M-3-1.south east);
\draw (M-3-1.north east) -- (M-3-1.south east);
\draw (M-3-1.north west) -- (M-3-1.north east);
\fill [bloc] (M-1-3.north west) rectangle (M-2-4.south east);
\draw (M-1-3.north west) -- (M-2-3.south west);
\draw (M-2-3.south west) -- (M-2-4.south east);
\fill [bloc] (M-3-3.north west) rectangle (M-3-4.south east);
\draw (M-3-3.north west) -- (M-3-4.north east);
\draw (M-3-3.north west) -- (M-3-3.south west);
\node[below=1mm] at (M.south) {$132$};
\end{tikzpicture}

\end{minipage}\hfill

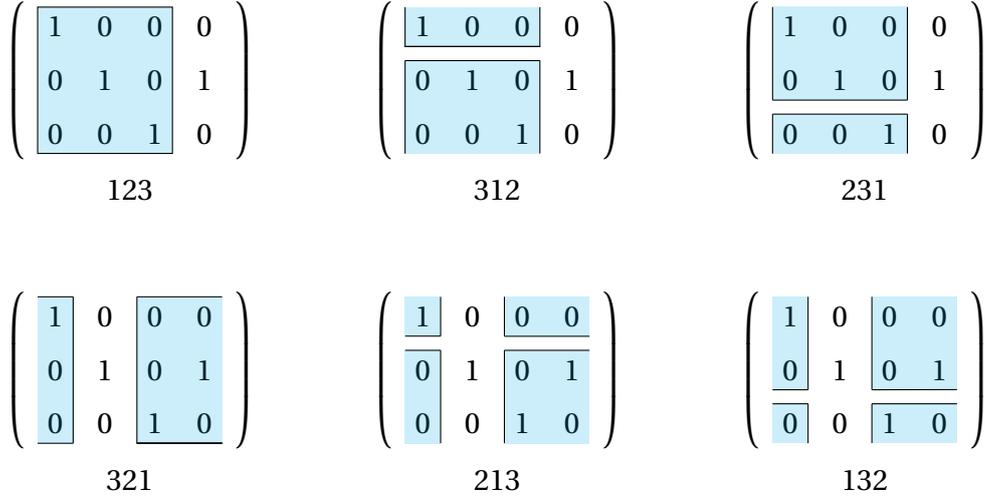
\captionof{figure}{Blocks in the $3\times 4$ matrix corresponding to the six permutations of $S_3$.}

\subsection{Definitions and equivalences between the problems}
We start by formally stating the problems we are going to focus on.
\begin{probleme}\label{pb1}
Find the smallest integer $m_1(n)$ such that $\exists T\in M_{m_1(n),n}(\{0,1\})$ with $S_n \subset T$ 
\end{probleme}

\begin{probleme}\label{pb2}
Find the smallest integer $m_2(n)$ such that $\exists T\in M_{n,m_2(n)}(\{0,1\})$ with $S_n \subset T$ 
\end{probleme}

\begin{remark}\label{r1}
In a $m\times p$ matrix, there are $mp$ distinct blocks, but we want $n!$ blocks in a u-matrix, so we have a trivial lower bound: $$m_1(n)\geq (n-1)! \textrm{ and } m_2(n)\geq (n-1)!$$
\end{remark}

\begin{probleme}\label{pb3}
Find the smallest integer $m_3(n)$ such that $\exists T\in M_{m_3(n),m_3(n)}(\{0,1\})$ with $S_n \subset T$ 
\end{probleme}
We will not study this last problem, but we can show the following result:
\begin{proposition}\label{prop3}
We have seen $m_3(1)=1$, $m_3(2)=2$ and $m_3(3)=5$. For $n\geq 5$, $m_3(n)\leq m_2(n)$.
\end{proposition}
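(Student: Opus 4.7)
Write $p=m_2(n)$ and fix a toric matrix $T\in M_{n,p}(\{0,1\})$ realising this minimum, so $S_n\subset T$. I will construct a $p\times p$ toric matrix $T'$ in which every block of $T$ still appears, yielding $m_3(n)\leq m_2(n)$. The construction tiles the rows of $T$ vertically: declare the $i$-th row of $T'$ to be the $\big(((i-1)\bmod n)+1\big)$-th row of $T$, for each $i\in\{1,\ldots,p\}$.

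The core step is to check that every $n\times n$ block of $T$ (read toroidally) reappears as a block of $T'$. Fix such a block with top-left corner at $(r,c)$, where $r\in\{1,\ldots,n\}$, and consider the block of $T'$ with the same top-left corner. Its entries are $T'_{r+i-1,\,c+j-1}$ for $1\leq i,j\leq n$, with column indices taken modulo $p$. Provided $r+n-1\leq p$, there is no vertical wrap-around in $T'$, and the definition of $T'$ gives
\[
T'_{r+i-1,\,c+j-1} \;=\; T_{((r+i-2)\bmod n)+1,\,c+j-1},
\]
which is exactly the $(i,j)$ entry of the block of $T$ at $(r,c)$. So the two blocks coincide, and $M\in T\Rightarrow M\in T'$.

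The non-wrap condition $r+n-1\leq p$ must hold for every $r\in\{1,\ldots,n\}$, i.e.\ $p\geq 2n-1$. By Remark~\ref{r1} we have $p=m_2(n)\geq (n-1)!$, and a short induction confirms $(n-1)!\geq 2n-1$ for $n\geq 5$ (the base case $n=5$ reads $24\geq 9$, and the inductive step $n(2n-1)\geq 2n+1$ is immediate for $n\geq 2$). Hence for $n\geq 5$ every block of $T$ is a block of $T'$, so $S_n\subset T'$ and $m_3(n)\leq p=m_2(n)$. The only delicate point is the ``jump'' in the cyclic row sequence of $T'$ when $n\nmid p$: blocks that cross this jump (from the last row $T_{((p-1)\bmod n)+1}$ back to $T_1$) may not correspond to any block of $T$, but the inequality $p\geq 2n-1$ keeps the starting rows $1,\ldots,n$ safely away from it, which suffices to recover every block of $T$.
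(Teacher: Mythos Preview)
Your argument is correct and matches the paper's approach almost exactly: both build a $p\times p$ matrix (with $p=m_2(n)$) whose first $2n-1$ rows are $L_1,\ldots,L_n,L_1,\ldots,L_{n-1}$, so that every cyclic vertical shift of $T$ occurs without vertical wrap, and both invoke $m_2(n)\geq(n-1)!\geq 2n-1$ for $n\geq 5$. The only cosmetic difference is that the paper pads the remaining rows with zeros, whereas you continue the periodic row pattern; this has no effect on the argument.
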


\begin{proof}
Consider a superpermutation matrix $T$ with n rows $L_1,...,L_n$ and $m_2(n)$ columns. But $m_2(n)\geq (n-1)! \geq 2n-1$ since $n\geq 5$. We may consider the matrix $T'$ with rows $L_1,...,L_n,L_1,...,L_{n-1}$ and rows of zeros until it is squared. It is a superpermutation matrix of size $m_2(n)\times m_2(n)$.
\end{proof}

We will now show that we can study Problems \ref{pb1} or \ref{pb2} indifferently.
\begin{proposition}\label{prop4}
For all $n$, $m_1(n)=m_2(n)$
\end{proposition}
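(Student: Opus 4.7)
The natural plan is to exhibit a size-preserving bijection between the two types of superpermutation matrices, using matrix transposition.

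Let $T \in \mathcal{M}_{m_1(n),n}(\{0,1\})$ be a superpermutation matrix realizing $m_1(n)$. Consider its transpose $T^t \in \mathcal{M}_{n,m_1(n)}(\{0,1\})$. First I would check that transposition is compatible with the toric block structure of Definition~\ref{def2}: if $M = (T_{i_0+i,\, j_0+j})_{1\leq i,j\leq n}$ is an $n\times n$ block of $T$ at position $(i_0, j_0)$, then $M^t = (T_{i_0+j,\, j_0+i})_{1\leq i,j\leq n} = ((T^t)_{j_0+i,\, i_0+j})_{1\leq i,j\leq n}$ is a block of $T^t$ at position $(j_0, i_0)$. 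The toric identifications on $T$ (identifying row $m_1(n)+1$ with row $1$ and column $n+1$ with column $1$) translate into the analogous identifications on $T^t$, so the correspondence $M \mapsto M^t$ is a bijection between the $n\times n$ blocks of $T$ and those of $T^t$.

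Next I would use that the set of permutation matrices is stable under transposition. By Proposition~\ref{prop2}, $M$ is a group morphism into $GL_n(\mathbb{R})$, and the transpose of a permutation matrix is its inverse, i.e. $M(\pi)^t = M(\pi^{-1})$. Hence $\{M(\pi)^t : \pi \in S_n\} = \{M(\pi^{-1}) : \pi \in S_n\} = S_n^{Mat}$. Since $T$ contains every $M(\pi)$ as a block, $T^t$ contains every $M(\pi)^t = M(\pi^{-1})$ as a block, i.e. $S_n^{Mat} \subset T^t$. Thus $T^t$ is a superpermutation matrix of dimensions $n \times m_1(n)$, which gives $m_2(n) \leq m_1(n)$.

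The reverse inequality $m_1(n) \leq m_2(n)$ is obtained by the symmetric argument: starting from an $n \times m_2(n)$ superpermutation matrix $T'$, the transpose $(T')^t$ is an $m_2(n) \times n$ superpermutation matrix by the same reasoning. Combining both inequalities yields $m_1(n) = m_2(n)$. There is no real obstacle here — the only point requiring a bit of care is keeping track of indices to confirm that blocks transpose to blocks in the toric setting, but everything reduces to the key observation that $S_n^{Mat}$ is invariant under $M \mapsto M^t$ because $S_n$ is stable under inversion.
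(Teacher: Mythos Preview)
Your proposal is correct and follows essentially the same route as the paper: the paper isolates as Lemma~\ref{lem1} the statement that the transpose of a superpermutation matrix is again a superpermutation matrix, proved via $M(\pi)= {}^t M(\pi^{-1})$, and then deduces $m_1(n)=m_2(n)$ exactly as you do. Your version is slightly more detailed in checking that toric blocks transpose to toric blocks, but the key idea is identical.
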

Below, we denote by $^t\!A$ the transpose of the matrix A.
\begin{lemme}\label{lem1}
If $T$ is a superpermutation matrix, then $^t\! T$ so is.\\
\end{lemme}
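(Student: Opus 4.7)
The plan is to combine two simple observations: transposition preserves the toric-block structure, and transposition acts on the set of permutation matrices by $\pi \mapsto \pi^{-1}$, which is a bijection of $S_n$. First I would check that if $T \in M_{m,p}(\{0,1\})$ is a toric matrix, then $^t\! T \in M_{p,m}(\{0,1\})$ inherits toricity, because the row-identification of $T$ becomes the column-identification of $^t\! T$ and vice versa. A direct unwinding of Definition \ref{def2} then shows that $B$ is an $n \times n$ block of $T$ at position $(i_0,j_0)$ if and only if $^t\! B$ is an $n \times n$ block of $^t\! T$ at position $(j_0,i_0)$; this is the only bookkeeping lemma needed.

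The algebraic ingredient I would use next is the identity $^t\! M(\pi) = M(\pi^{-1})$ for every $\pi \in S_n$. This follows from Definition \ref{def1}: the columns $e_{\pi(1)}, \dots, e_{\pi(n)}$ of $M(\pi)$ form an orthonormal basis, so $M(\pi)$ is orthogonal and $^t\! M(\pi) = M(\pi)^{-1}$; combining with the morphism property of $M$ (Proposition \ref{prop2}) gives $M(\pi)^{-1} = M(\pi^{-1})$.

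Putting the pieces together: given any $\sigma \in S_n$, the hypothesis $S_n \subset T$ yields in particular $M(\sigma^{-1}) \in T$, i.e.\ $M(\sigma^{-1})$ occurs as a block of $T$ at some position $(i_0, j_0)$. By the bookkeeping step, $^t\! M(\sigma^{-1}) = M(\sigma)$ occurs as a block of $^t\! T$ at position $(j_0, i_0)$. Since $\sigma$ was arbitrary and $\sigma \mapsto \sigma^{-1}$ exhausts $S_n$, this gives $S_n \subset {}^t\! T$, so $^t\! T$ is a superpermutation matrix. I do not expect any real obstacle here; the statement is essentially a combination of the orthogonality of permutation matrices with the definitional symmetry of the toric-block relation under transposition. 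The only point that deserves a careful sentence is the index swap $(i_0,j_0) \leftrightarrow (j_0,i_0)$, which should be spelled out once and then used silently.
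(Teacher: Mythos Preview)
Your argument is correct and follows exactly the same route as the paper's proof: use $M(\pi^{-1})\in T$, transpose to get $^t\!M(\pi^{-1})\in{}^t\!T$, and identify $^t\!M(\pi^{-1})=M(\pi)$ via orthogonality and the morphism property. The paper's proof is simply a terser version of yours, omitting the explicit discussion of toricity and the index swap.
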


\begin{proof}
Let $\pi\in S_n$. T is a superpermutation matrix, hence $M(\pi^{-1})\in T$. But $\forall M \in T, ^t\! M \in ^t\! T$. But $M(\pi)=M(\pi^{-1})^{-1}=^t\!M(\pi^{-1})$ so $M(\pi)\in ^t\! T$
\end{proof}
Thus if there is a superpermutation matrix of size $n\times m$, there is one of size $m\times n$, hence $m_1(n)=m_2(n)$.

Let us now take a look at how we could solve these problems. We will now work modulo $n$, to simplify writings.
\begin{definition}\label{def3}
If $M=M(\pi)\in S_n^{Mat}$, $\mathrm{rot}(M)=\{M(\pi\sigma^i)\vert 0\leq i \leq n-1\}$ where $\sigma(i)=i+1$. If $M(\pi)=\begin{pmatrix}
e_{\pi(1)} & ...  & e_{\pi(n)}
\end{pmatrix}$ then $M(\pi)M(\sigma)=\begin{pmatrix}
e_{\pi(2)} & ...  & e_{\pi(n)} & e_{\pi(1)}
\end{pmatrix}$. We note $\mathcal{R}_1$ the equivalence relation such that $x\mathcal{R}_1 y \Longleftrightarrow y\in \mathrm{rot}(x)$.\\
If $M=M(\pi)\in S_n^{Mat}$, $\mathrm{inc}(M)=\{M(\sigma^i\pi)\vert 0\leq i \leq n-1\}$. If $M(\pi)=\begin{pmatrix}
e_{\pi(1)} & ...  & e_{\pi(n)}
\end{pmatrix}$ then $M(\sigma)M(\pi)=\begin{pmatrix}
e_{\pi(1)+1} & ...  & e_{\pi(n)+1}
\end{pmatrix}$. We can also see it this way:  if $M(\pi)=\begin{pmatrix}
L_1\\
\vdots\\
L_n
\end{pmatrix}$ then $M(\sigma)M(\pi)=\begin{pmatrix}
L_n\\
L_1\\
\vdots\\
L_{n-1}
\end{pmatrix}$. We note $\mathcal{R}_2$ the equivalence relation such that $x\mathcal{R}_2 y \Longleftrightarrow y\in \mathrm{inc}(x)$.
\end{definition}

\begin{remark}\label{r2}
$\mathrm{inc}(^t\!M)=\{^t\! N \vert N\in \mathrm{rot}(M)\}$\\
\end{remark}
By the isomorphism between $S_n$ and $S_n^{Mat}$, we will also note $\mathrm{inc}(\pi)$ and $\mathrm{rot}(\pi)$.

\begin{lemme}\label{lem2}
If $T\in \mathcal{M}_{m,n}(\{0,1\})$ is toric, then $M\in T \Rightarrow \mathrm{rot}(M)\subset T$.\\
If $T\in \mathcal{M}_{n,m}(\{0,1\})$ is toric, then $M\in T \Rightarrow \mathrm{inc}(M)\subset T$.
\end{lemme}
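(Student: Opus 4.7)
The plan is to exploit the fact that a toric $n\times n$ block in a matrix with \emph{exactly} $n$ columns (resp.\ $n$ rows) must use all columns (rows) of the ambient matrix, so that shifting the starting position by one merely cycles them. By Definition~\ref{def3}, cyclic shifts of the columns of $M(\pi)$ realise $\mathrm{rot}$ and cyclic shifts of the rows realise $\mathrm{inc}$, so the conclusion follows.

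\textbf{First statement.} Suppose $T\in\mathcal{M}_{m,n}(\{0,1\})$ is toric and $M=M(\pi)\in T$ is witnessed at a position $(i_0,j_0)$, i.e.\ $M_{i,j}=T_{i_0+i,\,j_0+j}$ for $1\le i,j\le n$, with indices read cyclically. Because $T$ has only $n$ columns, toricity guarantees that the $n\times n$ block starting at $(i_0,j_0+1)$ is again a valid block of $T$; its first $n-1$ columns are the last $n-1$ columns of $M$, while its $n$-th column wraps back to the $1$st column of $M$. Writing $M(\pi)=(e_{\pi(1)},\ldots,e_{\pi(n)})$, the shifted block is $(e_{\pi(2)},\ldots,e_{\pi(n)},e_{\pi(1)})$, which Definition~\ref{def3} identifies with $M(\pi\sigma)$. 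Iterating $k$ times, the block at position $(i_0,j_0+k)$ is $M(\pi\sigma^k)$, and as $k$ ranges over $\{0,\ldots,n-1\}$ these exhaust $\mathrm{rot}(M(\pi))$. Every such block sits inside $T$, so $\mathrm{rot}(M)\subset T$.

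\textbf{Second statement.} I would give the dual argument: now $T\in\mathcal{M}_{n,m}(\{0,1\})$ has exactly $n$ rows, the block $M\in T$ uses all of them, and shifting $i_0$ by one cyclically permutes those rows. The row description in Definition~\ref{def3} says that this cyclic shift of rows corresponds to left multiplication by $\sigma^{\pm 1}$; iterating covers $\mathrm{inc}(M)$. Alternatively, one can derive it formally from the first statement: $^t\!T$ is an $m\times n$ toric matrix containing $^t\!M$, so the first statement yields $\mathrm{rot}(^t\!M)\subset {^t\!T}$, and transposing back while invoking Remark~\ref{r2} translates this into $\mathrm{inc}(M)\subset T$.

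There is no substantive obstacle; the whole content is the dictionary between cyclic shifts of columns/rows of a permutation matrix and right/left multiplication by $\sigma$. The only point requiring care is matching the direction of the shift to the correct side of multiplication, which is forced by the explicit column and row formulas given in Definition~\ref{def3}.
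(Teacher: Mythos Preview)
Your argument is correct and is precisely the content the paper has in mind: its proof is the single sentence ``It follows from the remarks in Definition~\ref{def3} about rows and columns and the fact that $T$ is toric,'' and you have simply unpacked that sentence. The transposition alternative for the second statement is a nice touch but not needed, as the paper treats both cases symmetrically via Definition~\ref{def3}.
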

\begin{proof}
It follows from the remarks in Definition \ref{def3} about rows and columns and the fact that T is toric.
\end{proof}

\subsection{Transition Graph and Simplification of the Problem}
Our first approach to finding new lower and upper bounds on $m_1(n)=m_2(n)$ is to consider a transition graph.\par
Let us start with the Problem \ref{pb1} : according to Lemma \ref{lem2}, we can forget toricity on left and right and try to find $T$ which contains an element of each equivalence class for the relation $\mathcal{R}_1$. We consider the weighed complete digraph $K_n$ where the vertices are the $(n-1)!$ equivalence classes for $\mathcal{R}_1$. The weight between $\mathrm{rot}(M)$ and $\mathrm{rot}(M')$ is the minimal number of rows to add at the end of M, and remove at its beginning, to get an element of $\mathrm{rot}(M')$. For example, $w(\mathrm{rot}\begin{pmatrix}
1 & 0 & 0\\
0 & 1 & 0\\
0 & 0 & 1
\end{pmatrix},\mathrm{rot}\begin{pmatrix}
0 & 0 & 1\\
0 & 1 & 0\\
1 & 0 & 0
\end{pmatrix})=2$. Then from a path that goes through each vertex in the graph, we can take a matrix M in the class of the first vertex, then add rows according to the edges encountered, to get a matrix with n element of each class, hence with toricity and Lemma \ref{lem2}, every element of $S_n^{Mat}$. The number of rows in the matrix is then the weight of the path, plus n for the initial matrix.\\
As an example, the adjacency matrix of $K_4$ is $\begin{pmatrix}
0 & 2 & 3 & 3 & 3 & 3\\
3 & 0 & 1 & 2 & 2 & 3\\
3 & 3 & 0 & 1 & 2 & 3\\
2 & 2 & 3 & 0 & 1 & 3\\
3 & 1 & 2 & 3 & 0 & 3\\
3 & 3 & 3 & 2 & 3 & 0
\end{pmatrix}$ and with the construction we obtain the following superpermutation matrix with 12 rows: $$\begin{pmatrix}
1 & 0 & 0 & 0\\
0 & 1 & 0 & 0\\
0 & 0 & 1 & 0\\
0 & 0 & 0 & 1\\
0 & 1 & 0 & 0\\
1 & 0 & 0 & 0\\
0 & 0 & 1 & 0\\
0 & 0 & 0 & 1\\
0 & 1 & 0 & 0\\
1 & 0 & 0 & 0\\
0 & 0 & 0 & 1\\
0 & 0 & 1 & 0
\end{pmatrix}$$\\

As for problem \ref{pb2}, we can do a similar construction and look for T a matrix with n rows that contains a block of each class for the relation $\mathcal{R}_2$. Then we consider the graph $H_n$ where vertices are the $(n-1)!$ equivalence classes for $\mathcal{R}_1$. The weight between $\mathrm{inc}(M)$ and $\mathrm{inc}(M')$ is the minimal number of columns to add on the right of M, and to remove on the left, to obtain an element of $\mathrm{inc}(M')$.\\ For example, $w(\mathrm{inc}\begin{pmatrix}
1 & 0 & 0\\
0 & 1 & 0\\
0 & 0 & 1
\end{pmatrix},\mathrm{inc}\begin{pmatrix}
0 & 0 & 1\\
0 & 1 & 0\\
1 & 0 & 0
\end{pmatrix})=2$. From a path that goes through each vertex in the graph, we can take a matrix M in the class of the first vertex, then add columns according to the edges encountered, to get a matrix with n element of each class, hence with toricity and Lemma \ref{lem2}, every element of $S_n^{Mat}$. The number of columns in the matrix is then the weight of the path, plus n for the initial matrix.\\
As above, the adjacency matrix of $H_4$ is $\begin{pmatrix}
0 & 2 & 3 & 3 & 3 & 3\\
3 & 0 & 1 & 3 & 2 & 2\\
3 & 3 & 0 & 2 & 1 & 3\\
3 & 1 & 3 & 0 & 2 & 3\\
2 & 2 & 3 & 1 & 0 & 3\\
3 & 3 & 3 & 3 & 2 & 0
\end{pmatrix}$ and we can obtain a superpermutation matrix with 12 columns: $$\begin{pmatrix}
1&0&0&0&0&1&0&0&0&1&0&0\\
0&1&0&0&1&0&0&0&1&0&0&0\\
0&0&1&0&0&0&1&0&0&0&0&1\\
0&0&0&1&0&0&0&1&0&0&1&0
\end{pmatrix}$$\\

\begin{remark}\label{r3}
The path that goes through every vertex of minimal weight does not correspond to the solution of the problem, because it does not use the toricity of the matrix on left and right (respectively up and down) for Problem \ref{pb2} (respectively Problem \ref{pb1}). For $n=3$, we obtain 5 with the graphs but we saw that $m_1(3)=4$. However, we claim that the bounds from these graphs are still interesting. Furthermore, even if the two problems have the same solutions with transposition, graphs are not the same. At first, it appears that $H_n$ might be simpler to study because we can write the matrix obtained from a path as column vectors $e_i$, hence as a word, just like superpermutations.
\end{remark}
This remark leads to the definition of the following problem:

\begin{probleme}\label{pb4}
Let $\mathcal{R}$ be an equivalence relation on $S_n$, we look for the minimal length $m_{\mathcal{R}}(n)$ of a universal word for the equivalence classes of $\mathcal{R}$, that is to say a word which contains at least an element of each class as a factor.\\
We will denote $m(n):=m_{\mathcal{R}_2}(n)$, and that is the value we are interested in.
\end{probleme}

\begin{proposition}\label{prop5}
We have the inequality $m(n)-(n-1)\leq m_1(n)=m_2(n)\leq m(n)$.
\end{proposition}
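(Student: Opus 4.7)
My plan is to exploit the straightforward dictionary between words of length $\ell$ over $[n]$ and $n \times \ell$ matrices whose $j$-th column is the basis vector $e_{w(j)}$: a factor of $w$ of length $n$ representing a permutation $\pi$ corresponds exactly to the $n \times n$ block based at (row $1$, column $j$) being equal to $M(\pi)$. Combining this dictionary with Lemma \ref{lem2}---which recovers the full $\mathcal{R}_2$-class of any block in an $n$-row toric matrix---and Proposition \ref{prop4} (which gives $m_1(n) = m_2(n)$), both inequalities become natural, and I prove them separately.

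For the upper bound $m_2(n) \leq m(n)$, I take a universal word $w$ of length $m(n)$ for $\mathcal{R}_2$ and form the $n \times m(n)$ matrix $T$ whose $j$-th column is $e_{w(j)}$. For each $\mathcal{R}_2$-class $C$, a representative $\pi \in C$ appears as a factor of $w$ at some position $c$, so the block of $T$ at (row $1$, column $c$), whose columns are $e_{\pi(1)}, \dots, e_{\pi(n)}$, equals $M(\pi)$. By Lemma \ref{lem2} applied to the toric matrix $T$, the entire class $C$ sits inside $T$ as toric blocks. Ranging $C$ over all classes yields $S_n^{Mat} \subset T$, making $T$ a superpermutation matrix of size $n \times m(n)$.

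For the lower bound $m(n) - (n-1) \leq m_2(n)$, I start from a superpermutation matrix $T$ of size $n \times m_2(n)$ and build a word $w$ of length $m_2(n) + (n-1)$: for each column $j$ of $T$, I set $w(j) = k$ if that column equals $e_k$ and choose an arbitrary letter of $[n]$ otherwise, then append $w(1), \dots, w(n-1)$ at the end so that the toric column-windows of $T$ become honest factors of $w$. For any $\pi \in S_n$, $M(\pi)$ is a block of $T$ based at some $(r, c)$; since the $n$ columns $c, c+1, \dots, c+n-1$ of $T$ (indices modulo $m_2(n)$) are the columns of a vertically shifted $M(\pi)$, they are forced to be basis vectors, and the block at (row $1$, column $c$) is then a genuine permutation matrix $M(\pi')$ with $\pi' \in \mathrm{inc}(\pi)$. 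The factor of $w$ of length $n$ starting at position $c$ thus represents $\pi'$, so every $\mathcal{R}_2$-class has a representative appearing as a factor of $w$, and $w$ is universal for $\mathcal{R}_2$.

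The main bookkeeping hazard is aligning the two cyclic wraps: the cyclic factors of the word on one side and the column-torus of the matrix on the other. In the lower bound in particular, the appended suffix $w(1) \cdots w(n-1)$ is what converts wraparound blocks into genuine factors, and a minor additional point is that columns of $T$ which are not basis vectors cannot lie in any permutation block, so the arbitrary letters assigned to them never appear inside a factor certifying an $\mathcal{R}_2$-class.
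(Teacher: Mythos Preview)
Your proof is correct and follows the same approach as the paper: convert words to $n\times \ell$ matrices via columns $e_{w(j)}$ for the upper bound, and unfold the column-torus by repeating $n-1$ columns for the lower bound. Your write-up is in fact more careful than the paper's terse argument, since you explicitly handle the possibility that some columns of $T$ are not standard basis vectors (assigning them arbitrary letters and observing they never lie inside a permutation block), a detail the paper glosses over.
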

\begin{proof}
Given a word $w(1)...w(k)$, we can obtain a matrix $\begin{pmatrix}
e_{w(1)} & ... & e_{w(k)}
\end{pmatrix}$ hence the inequality on the right. Given a superpermutation matrix for Problem \ref{pb2} with k column, if we add the n-1 first column at the beginning, toricity is not required and we get a universal word for the equivalence relation, so we get the inequality on the left.
\end{proof}

\begin{proposition}\label{prop6}
There is a bijection between paths in $H_n$ that go through every vertex of the graph and universal words for $\mathcal{R}_2$. Given a path H and the associated word u, $\vert u \vert = w(H)+n$.
\end{proposition}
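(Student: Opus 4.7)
The plan is to adapt the correspondence of Proposition \ref{prop1} (superpermutation $\leftrightarrow$ walk through the transition graph) to the matrix setting, now using $H_n$ and words universal for $\mathcal{R}_2$.

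For the forward direction (path to word), given a walk $c_1 \to c_2 \to \cdots \to c_p$ visiting every vertex of $H_n$, choose a representative $M_1 \in c_1$. For each edge $c_i \to c_{i+1}$ of weight $w_i = w(c_i, c_{i+1})$, the very definition of the edge weight provides a representative $M_{i+1} \in c_{i+1}$ whose first $n - w_i$ columns coincide with the last $n - w_i$ columns of $M_i$; this $M_{i+1}$ is uniquely determined once $M_i$ is fixed, because the $n$ members of $c_{i+1}$ (obtained by iterated left-multiplication by $M(\sigma)$, which cyclically increments all column indices) have pairwise distinct first-column vectors. Append the remaining $w_i$ columns of $M_{i+1}$ on the right. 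After all $p-1$ transitions, the resulting matrix has $n + \sum_{i=1}^{p-1} w_i = n + w(H)$ columns; reading columns as letters of $[n]$ yields a word $u$ with $|u| = w(H) + n$, universal for $\mathcal{R}_2$ since each $c_i$ appears as the factor $M_i$.

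For the backward direction (word to path), given a universal word $u$ for $\mathcal{R}_2$, form the $n$-row matrix whose columns are $e_{u(1)}, \ldots, e_{u(|u|)}$ and scan left-to-right. Whenever a length-$n$ factor of this matrix is itself a permutation matrix, record its $\mathcal{R}_2$-class. Universality forces every class of $H_n$ to appear, so the recorded sequence is a walk visiting every vertex; the gap between two consecutive permutation-factor positions is a valid ``add on the right / remove on the left'' transition, and is therefore at least the corresponding edge weight in $H_n$. The two constructions are mutually inverse on their natural domains, establishing the claimed bijection, and the length identity $|u| = w(H) + n$ follows directly from the column count in the forward construction.

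The main delicate point is twofold: showing that the forward construction is well-defined (i.e.\ that $M_{i+1}$ is unique given $M_i$), and showing that the backward construction recovers a walk whose weight is exactly $|u| - n$. Both rest on the same observation: within any $\mathcal{R}_2$-class the $n$ representatives exhaust all possible values of the first column, so any overlap constraint with $M_i$ pins down a unique $M_{i+1}$, and reciprocally the gaps between consecutive permutation-factor positions in $u$ realise the minimal edge weights of $H_n$.
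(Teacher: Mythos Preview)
Your two constructions are exactly the ones the paper uses, and your uniqueness argument for $M_{i+1}$ (via the first-column values of the $n$ representatives in an $\mathcal{R}_2$-class) is a genuine addition the paper leaves implicit.

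There is, however, a gap in the ``mutually inverse'' claim. First, the forward map depends on the choice of $M_1\in c_1$, so at best you get a bijection up to the global letter-shift $u\mapsto \sigma\cdot u$. Second, and more substantively, nothing prevents an \emph{extra} permutation factor from appearing in the forward-constructed word strictly between $M_i$ and $M_{i+1}$: the window consisting of the last $n-j$ columns of $M_i$ and the first $j$ newly appended columns may well have all entries distinct, and minimality of $w_i$ only rules out landing in $c_{i+1}$ early, not in some other class. Thus the backward scan of the forward-constructed $u$ can return a strictly longer walk than $H$. Conversely, for an arbitrary universal word the gap between consecutive permutation factors can strictly exceed the corresponding edge weight in $H_n$; your own phrase ``at least the corresponding edge weight'' already concedes this, so the later assertion that the gaps ``realise the minimal edge weights'' is not justified and in general false. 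The two constructions are therefore not literal inverses on all of their stated domains.

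None of this harms what the proposition is actually used for: your forward construction yields $|u|=w(H)+n$, and your backward construction yields a covering walk of weight at most $|u|-n$, which together give $m(n)=n+\min_H w(H)$. The paper's own proof is just as terse on these points and asserts the exact weight identity $w(H)=|u|-n$ in the backward direction without addressing the same subtleties, so you are not worse off than the original; but if you want a genuine bijection you should restrict on the word side to those universal words in which every length-$n$ window between the first and last permutation factors is itself a permutation and each gap equals the corresponding edge weight, and on the path side fix a normalisation of the initial representative.
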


\begin{proof}
The construction of u from H is the same as the construction of the superpermutation matrix described earlier, and we have indeed $\vert u \vert = w(H)+n$. Earlier, the issue was that toricity prevented us to construct a path for some superpermutation matrix. Here, there is not this problem anymore. Given a word u, we simply number the permutation $x_1,...,x_k$ encountered. Then there is a path $(\bar{x_1},...,\bar{x_k})$ in $H_n$ and the length is $\vert u\vert - n$ because of the definition of the weight function.
\end{proof}

\section{Bounds on m(n)}
\subsection{Searching for Upper and Lower Bounds}
\begin{proposition}\label{prop7}
The bijection shown in Section 2.3 and Proposition \ref{prop6} allows us to study the graph $H_n$ and find bounds on paths instead of words.\\
Since the weight function on edges is bounded by $1$ and $n-1$, and such a path must have at least $(n-1)!-1$ edges, and there exists a path with that number of edges, we get: 
\begin{itemize}
\item $I(n)=(n-1)!+n-1$ a trivial lower bound on the length of a universal word
\item $S(n)=(n-1)(n-1)!+1$ a trivial upper bound 
\end{itemize}
We can see that $\frac{S(n)}{I(n)}\sim n$.
\end{proposition}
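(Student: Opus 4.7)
The plan is to translate the problem via Proposition \ref{prop6} into a question about paths in the transition graph $H_n$, whose vertices are the $(n-1)!$ equivalence classes for $\mathcal{R}_2$. Since $|u| = w(H) + n$ for the path $H$ associated with a universal word $u$, it suffices to produce lower and upper bounds on $w(H)$ over all paths visiting every vertex.

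First I would pin down the weight bounds on the edges of $H_n$. Given two distinct classes $\mathrm{inc}(M)$ and $\mathrm{inc}(M')$, the weight is at least $1$, because the two classes share no representative, so at least one column must be appended on the right. The weight is at most $n-1$: denoting the last column of $M$ by $e_i$, the class $\mathrm{inc}(M')$ contains a representative whose first column is exactly $e_i$, obtained by choosing the cyclic row-shift of $M'$ that maps $\pi'(1)$ to $i$. The remaining $n-1$ columns of that representative can then be appended to $M$ on the right, producing an element of $\mathrm{inc}(M')$ at a cost of $n-1$ new columns.

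Combining these bounds with the fact that a path through all $(n-1)!$ vertices uses at least $(n-1)! - 1$ edges gives the lower bound: $w(H) \geq (n-1)! - 1$, hence $|u| \geq (n-1)! - 1 + n = I(n)$. For the upper bound, since $H_n$ is a complete digraph on $(n-1)!$ vertices, any enumeration of the vertices yields a Hamiltonian path with exactly $(n-1)! - 1$ edges; applying the per-edge bound $w \leq n-1$ gives a path of total weight at most $(n-1)\bigl((n-1)! - 1\bigr)$, hence a universal word of length at most $(n-1)\bigl((n-1)! - 1\bigr) + n = (n-1)(n-1)! + 1 = S(n)$.

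The asymptotic $S(n)/I(n) \sim n$ then follows by direct comparison: the numerator $(n-1)(n-1)! + 1$ is equivalent to $n \cdot (n-1)!$ while the denominator $(n-1)! + n - 1$ is equivalent to $(n-1)!$ as $n \to \infty$. The only step that is not purely automatic is the edge-weight upper bound, but it reduces to the observation that, up to row cyclic-shift, every basis vector appears as the first column of exactly one representative of a given $\mathcal{R}_2$-class, which is immediate from Definition \ref{def3}.
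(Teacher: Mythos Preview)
Your argument is correct and follows exactly the approach sketched in the paper: the proposition there is stated with its justification embedded (edge weights lie between $1$ and $n-1$, any path covering all $(n-1)!$ vertices has at least $(n-1)!-1$ edges, and a Hamiltonian path realises this count), and you have simply fleshed out each of those claims. In particular, your verification that the edge weight is at most $n-1$ via the row cyclic-shift placing $e_i$ in the first column is the natural elaboration of what the paper leaves implicit.
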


\begin{center}
\includegraphics[scale=0.7]{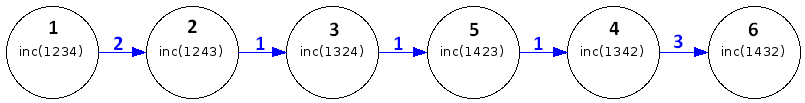}
\captionof{figure}{A path going through every vertex in $H_4$. It corresponds to the superpermutation matrix with 12 columns given in Section 2.3. The universal word corresponding to this path will be $123421342143$.}
\end{center}

We are now going to look for better bounds.
We define a 1-cycle as a cycle in $H_n$ using only weight-1 edges. We note $\textrm{cycles}^{(1)}(k)=\{C \mid C \textrm{ is a 1-cycle going through k vertices}\}$ and $\mathcal{D}(n)=\{k\in \mathbb{N}\vert k\vert n\}$.
\begin{lemme}\label{lem3}
Let $d$ be an integer. If there is a 1-cycle of length $d$, then $d\vert n$.
\end{lemme}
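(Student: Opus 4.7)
The plan is to first understand the structure of weight-1 edges in $H_n$ very explicitly, and then reduce the lemma to a routine statement about a subgroup of $\mathbb{Z}$.

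First, I would observe that a weight-1 edge out of a class $\mathrm{inc}(M)$ has essentially a unique target. Writing $M=M(\pi)$, the operation of removing the leftmost column and appending a single column on the right produces a matrix whose columns are $e_{\pi(2)},\dots,e_{\pi(n)},e_j$ for some $j$; for this to be a permutation matrix the added column is forced to be $e_{\pi(1)}$, and the resulting matrix is $M(\pi\sigma)$. Since every representative $M(\sigma^k\pi)\in \mathrm{inc}(M)$ gives rise in the same way to $M(\sigma^k\pi\sigma)$, which lies in $\mathrm{inc}(\pi\sigma)$, the weight-1 successor of $\mathrm{inc}(\pi)$ is canonically $\mathrm{inc}(\pi\sigma)$.

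Consequently, a 1-cycle of length $d$ through a class $\mathrm{inc}(\pi_0)$ is necessarily the sequence
\[
\mathrm{inc}(\pi_0),\;\mathrm{inc}(\pi_0\sigma),\;\mathrm{inc}(\pi_0\sigma^2),\;\dots,\;\mathrm{inc}(\pi_0\sigma^{d-1}),
\]
with the closure condition $\mathrm{inc}(\pi_0\sigma^d)=\mathrm{inc}(\pi_0)$. By definition of $\mathcal{R}_2$ this means $\pi_0\sigma^d=\sigma^{j}\pi_0$ for some integer $j$, while $d$ is the \emph{smallest} positive integer for which such a $j$ exists.

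Next I would consider
\[
H=\{k\in\mathbb{Z}\mid \exists\, j\in\mathbb{Z},\ \pi_0\sigma^k=\sigma^{j}\pi_0\}.
\]
If $\pi_0\sigma^{k_1}=\sigma^{j_1}\pi_0$ and $\pi_0\sigma^{k_2}=\sigma^{j_2}\pi_0$, then $\pi_0\sigma^{k_1+k_2}=\sigma^{j_1}\pi_0\sigma^{k_2}=\sigma^{j_1+j_2}\pi_0$, and inverting the first relation gives $\pi_0\sigma^{-k_1}=\sigma^{-j_1}\pi_0$. Thus $H$ is a subgroup of $\mathbb{Z}$, and hence $H=d'\mathbb{Z}$ for some $d'\geq 0$. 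By the minimality characterization of $d$ we have $d'=d$.

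Finally, since $\sigma$ has order $n$ in $S_n$, the identity $\pi_0\sigma^n=\pi_0=\sigma^0\pi_0$ shows $n\in H=d\mathbb{Z}$, whence $d\mid n$. The main (and only) conceptual point is the first one: verifying that a weight-1 step is literally a column rotation, so that the sequence of vertices in any 1-cycle is forced to be a coset of powers of $\sigma$ acting on the right; after that, closure under addition of exponents is purely formal.
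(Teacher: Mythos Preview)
Your proof is correct and follows essentially the same approach as the paper's: both begin by identifying the unique weight-$1$ successor of $\mathrm{inc}(\pi)$ as $\mathrm{inc}(\pi\sigma)$, and then exploit the cyclic group $\langle\sigma\rangle\cong\mathbb{Z}/n\mathbb{Z}$. The only cosmetic difference is that the paper phrases the conclusion via a surjective morphism $\mathbb{Z}/n\mathbb{Z}\twoheadrightarrow C$ (so $|C|\mid n$), whereas you phrase it dually via the stabilizer subgroup $H=d\mathbb{Z}\subset\mathbb{Z}$ and observe $n\in H$.
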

\begin{proof}
Let $C$ be 1-cycle, an edge goes from $\mathrm{inc}(\pi)$ to $\mathrm{inc}(\pi')$ iff $\mathrm{inc}(\pi')=\mathrm{inc}(\pi\sigma)$ with $\sigma=(12...n)$. Hence if we chose a vertex $e=\mathrm{inc}(\pi)$ in C, we can write $C=\{\mathrm{inc}(\pi\sigma^i)\vert i\in \mathbb{N}\}$. Then we can define a group structure on C where e is the identity element and $\mathrm{inc}(\pi \sigma^i)\otimes \mathrm{inc}(\pi \sigma^j)=\mathrm{inc}(\pi \sigma^{i+j})$. We get an surjective morphism from $<\sigma>\simeq \mathbb{Z}/n\mathbb{Z}$ to $C$. Then $C$ is isomorphic to a quotient group of $\mathbb{Z}/n\mathbb{Z}$ so its cardinal divides $n$.
\end{proof}

\begin{center}
\begin{tikzpicture}
\node (1) at (1,0.5) {$\mathrm{inc}(1234)$};
\node (2) at (4,2) {$\mathrm{inc}(1243)$};
\node (3) at (8,2) {$\mathrm{inc}(1324)$};
\node (4) at (4,-1) {$\mathrm{inc}(1342)$};
\node (5) at (8,-1) {$\mathrm{inc}(1423)$};
\node (6) at (11,0.5) {$\mathrm{inc}(1432)$};
\draw[->,>=latex, thick] (2) to[bend left=40] (3);
\draw[->,>=latex, thick] (3) to (5);
\draw[->,>=latex, thick] (5) to[bend left=40] (4);
\draw[->,>=latex, thick] (4) to (2);
\end{tikzpicture}
\end{center}
\captionof{figure}{1-cycles in $H_4$. We have one 1-cycle of length 4 and two 1-cycles of length 1.}

We are going to determine $\vert \mathrm{cycles}^{(1)}(d)\vert$ for $d\in \mathcal{D}(n)$.\\
Let us begin with d=1. Below, arithmetic is done modulo n.\\
\begin{lemme}\label{lem4}
For all n, the number of 1-cycles of length 1 is given by
$$\vert \mathrm{cycles}^{(1)}(1) \vert=\vert \{\pi\vert \exists k, \forall i,\pi(i+1)=\pi(i)+k \textrm{ and } \pi(1)=1\} \vert \leq n-1$$
\end{lemme}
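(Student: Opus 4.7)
The plan is to translate the existence of a self-loop at a vertex of $H_n$ into an arithmetic condition on a canonical representative of the equivalence class, and then count the permutations satisfying it.

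First I would unpack what a 1-cycle of length 1 means. As noted in the proof of Lemma \ref{lem3}, the unique weight-1 edge leaving $\mathrm{inc}(\pi)$ goes to $\mathrm{inc}(\pi\sigma)$, where $\sigma=(12\cdots n)$. Hence there is a 1-cycle of length 1 through $\mathrm{inc}(\pi)$ if and only if $\mathrm{inc}(\pi\sigma)=\mathrm{inc}(\pi)$, equivalently $\pi\sigma=\sigma^{k}\pi$ for some $k\in\{0,\dots,n-1\}$. Evaluating both sides at $i$ and using $\pi\sigma(i)=\pi(i+1)$ and $\sigma^{k}\pi(i)=\pi(i)+k$ (arithmetic mod $n$), this is equivalent to the arithmetic-progression condition $\pi(i+1)=\pi(i)+k$ for all $i$.

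Next I would choose a canonical representative in each class. Each class $\mathrm{inc}(\pi)$ has exactly $n$ elements $\sigma^{j}\pi$ for $j=0,\dots,n-1$, and exactly one of them sends $1$ to $1$: take $j\equiv 1-\pi(1)\pmod{n}$. The arithmetic-progression property is invariant along a class, because replacing $\pi$ by $\sigma^{j}\pi$ adds the same constant $j$ to every value $\pi(i)$ and therefore leaves the consecutive differences $\pi(i+1)-\pi(i)$ unchanged. Mapping each self-loop vertex $\mathrm{inc}(\pi)$ to its unique representative with $\pi(1)=1$ yields a bijection between 1-cycles of length $1$ and the set $\{\pi\mid\exists k,\ \forall i,\ \pi(i+1)=\pi(i)+k\text{ and }\pi(1)=1\}$, which is the first assertion.

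Finally I would bound this set. The condition $\pi(1)=1$ together with $\pi(i+1)=\pi(i)+k$ forces $\pi(i)=1+(i-1)k\bmod n$, so $\pi$ is entirely determined by the common difference $k\in\mathbb{Z}/n\mathbb{Z}$. For $\pi$ to be a permutation of $\{1,\dots,n\}$ the map $i\mapsto 1+(i-1)k\bmod n$ must be injective, which rules out $k=0$ (and more generally forces $\gcd(k,n)=1$). In particular $k$ belongs to $\{1,\dots,n-1\}$, so there are at most $n-1$ such permutations.

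The only real subtlety is keeping straight the side of multiplication and the direction of composition, in order to correctly derive $\pi(i+1)=\pi(i)+k$ from $\pi\sigma=\sigma^{k}\pi$; once that identity is in hand the counting is immediate. A sharper statement would replace the bound $n-1$ by $\varphi(n)$, but the weaker inequality is what the lemma states and it suffices for the sequel.
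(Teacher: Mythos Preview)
Your proof is correct and follows essentially the same route as the paper: identify self-loops via $\mathrm{inc}(\pi\sigma)=\mathrm{inc}(\pi)$, rewrite this as $\pi\sigma=\sigma^{k}\pi$, deduce the arithmetic-progression condition $\pi(i+1)=\pi(i)+k$, normalize by $\pi(1)=1$, and bound by the number of admissible $k$. You are slightly more explicit than the paper in justifying the choice of canonical representative (invariance of the condition under $\pi\mapsto\sigma^{j}\pi$) and in excluding $k=0$, but the argument is the same.
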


\begin{proof}
$$\begin{tabular}{lll}
  $\mathrm{cycles}^{(1)}(1)$   &  =  & $\{\mathrm{inc}(\pi)\in S_n\vert \not \exists y, w(\mathrm{inc}(\pi),y)=1\}$\\
     & = & $\{\mathrm{inc}(\pi)\vert \mathrm{inc}(\pi)$ = $\mathrm{inc}(\pi\sigma)\}$\\
     & = & $\{\mathrm{inc}(\pi)\vert \exists \pi'\in \mathrm{inc}(\pi), \, \pi'= \pi\sigma\}$
\end{tabular}$$\\
But $\mathrm{inc}(\pi)=\{\sigma^k\pi\vert 0\leq k \leq n-1\}$ so
$$\begin{tabular}{lll}
  $\vert \mathrm{cycles}^{(1)}(1) \vert$   &  =  & $\vert \{\mathrm{inc}(\pi)\vert \exists k, \forall i, \pi\sigma(i)=\sigma^k\pi(i)\}\vert$\\
     & = & $\vert\{\mathrm{inc}(\pi)\vert \exists k, \forall i, \pi(i+1)=\pi(i)+k\} \vert$\\
     & = & $\vert \{\pi\in S_n\vert \exists k, \forall i,\pi(i+1)=\pi(i)+k \textrm{ and } \pi(1)=1\} \vert$\\
\end{tabular}$$\\
In the last equality, we fix the first value of $\pi$ in order to search for permutations instead of equivalence classes. The permutation is completely determined by $k$, so $\vert \mathrm{cycles}^{(1)}(1) \vert \leq n-1$.
\end{proof}

\begin{remark}\label{r5}
If n is a prime number, we noticed we must have $\vert 1-\mathrm{cycles}(1) \vert + n \vert 1-\mathrm{cycles}(n) \vert=(n-1)!$. According to Wilson's theorem, $(n-1)!\equiv -1 \equiv n-1 [n]$ and $\vert 1-\mathrm{cycles}(1) \vert \leq n-1$: we deduce that $\vert 1-\mathrm{cycles}(1) \vert=n-1$ and $\vert \mathrm{cycles}^{(1)}(n)\vert = \frac{(n-1)!-(n-1)}{n}$, so we completely determined the numbers of 1-cycles for n prime.\\
\end{remark}

If n is any integer, we prove the following result:
\begin{lemme}\label{lem5}
$\vert \mathrm{cycles}^{(1)}(1) \vert=\varphi(n)$ where $\varphi$ is Euler's totient function.
\end{lemme}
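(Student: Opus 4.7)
The plan is to build directly on Lemma \ref{lem4}, which has already reduced the count to
\[
|\mathrm{cycles}^{(1)}(1)| = \bigl| \{\pi \in S_n \mid \exists k,\ \forall i,\ \pi(i+1)=\pi(i)+k \text{ and } \pi(1)=1\} \bigr|.
\]
So all that remains is to identify for which values of $k \in \{0,1,\dots,n-1\}$ the recurrence $\pi(1)=1,\ \pi(i+1)=\pi(i)+k \pmod n$ actually produces a permutation of $[n]$, and then count them.

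First I would write down the closed form obtained by unrolling the recurrence, namely $\pi(i) = 1 + (i-1)k \pmod n$, with values taken as representatives in $\{1,\dots,n\}$. Since $\pi$ is determined by $k$, the problem becomes: for how many $k \in \mathbb{Z}/n\mathbb{Z}$ is the sequence $1,\,1+k,\,1+2k,\,\dots,\,1+(n-1)k$ a bijection onto $\{1,\dots,n\}$?

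Next I would rule out $k=0$ (which gives the constant sequence equal to $1$, not a permutation for $n\geq 2$), and for $k\neq 0$ observe that the sequence is a permutation iff the $n$ multiples $0,\,k,\,2k,\dots,(n-1)k$ are pairwise distinct modulo $n$, which is precisely the condition that $k$ generates the cyclic group $\mathbb{Z}/n\mathbb{Z}$, i.e.\ $\gcd(k,n)=1$. Conversely, if $\gcd(k,n)=d>1$, then $(n/d)k \equiv 0 \pmod n$ forces a repetition before reaching $n$ terms.

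Finally, the number of $k \in \{1,\dots,n-1\}$ coprime to $n$ is by definition $\varphi(n)$, which yields the stated equality. There is no real obstacle here beyond recognizing the arithmetic-progression structure modulo $n$; the only subtle point is handling $k=0$ separately (which is why the upper bound $n-1$ of Lemma \ref{lem4} is generally not sharp, and why the answer is $\varphi(n)$ rather than $n-1$). Note that this is consistent with Remark \ref{r5}: when $n$ is prime, $\varphi(n)=n-1$.
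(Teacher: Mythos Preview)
Your proposal is correct and follows essentially the same route as the paper's own proof: both start from the reduction in Lemma~\ref{lem4}, unroll the recurrence to $\pi(i)=1+(i-1)k \pmod n$, and then argue that this defines a permutation precisely when $\gcd(k,n)=1$, giving $\varphi(n)$ valid choices of $k$. The only cosmetic difference is that you single out $k=0$ explicitly, whereas the paper handles it implicitly through the coprimality condition (since $\gcd(0,n)=n>1$ for $n\ge 2$).
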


\begin{proof}
We are looking for $\pi$ such that $\pi(1)=1$ and $\exists k<n$, $\forall i$, $\pi(i+1)=\pi(i)+k$, which is equivalent to $\exists k<n$, $\forall i\in\mathbb{Z}/n\mathbb{Z}$, $\pi(i+1)=1+ik$. We also see that it justifies the fact that a permutation will be determined by k, so we can simply count the k that will work. We define \[
  f_k\colon\begin{aligned}[t]
        \mathbb{Z}/n\mathbb{Z}&\longrightarrow \mathbb{Z}/n\mathbb{Z}\\
        i&\longmapsto 1+ik
  \end{aligned}
\]
Then we only need to count k such that $f_k$ is bijective. $\mathbb{Z}/n\mathbb{Z}$ is finite so $f_k$ bijective $\Longleftrightarrow$ $f_k$ injective.\\
Let $i,i'$ be integers such that $1+ik=1+i'k\Longrightarrow (i'-i)k\equiv 0 [n]$.\\
\begin{itemize}
\item If $\mathcal{D}(k)\cap \mathcal{D}(n)=\{1\}$, $n\vert (i'-i)k \Longrightarrow n\vert i'-i \Longrightarrow i'=i$ in $ \mathbb{Z}/n\mathbb{Z}$
\item If $\exists p$ s.t. $n=pr_1$ and $k=pr_2$, then $i=0$ and $i'=r_1$ yields $(i'-i)k=pr_1r_2=nr_2\equiv 0 [n]$
\end{itemize}
So $f_k$ bijective $\Longleftrightarrow$ $\mathcal{D}(k)\cap \mathcal{D}(n)=\{1\}$, and thus $\vert\mathrm{cycles}^{(1)}(1) \vert=\varphi(n)$ by the definition of Euler's totient function.
\end{proof}

We can now turn to the general case, computing $\vert \mathrm{cycles}^{(1)}(d)\vert$ for any $d\in\mathcal{D}(n)$ and any n.\\
We define $$E(d,n)= \{\mathrm{inc}(\pi)\vert \exists k<n, \forall i\in  \mathbb{Z}/n\mathbb{Z},\pi(i+d)=\pi(i)+k\}$$
Notice that $\vert E(1,n)\vert = \vert \mathrm{cycles}^{(1)}(1) \vert$. The situation is however not that simple for $d\not = 1$.
\begin{proposition}\label{prop8}
$$E(d,n)=\bigcup_{k\vert d} \quad \bigcup_{C\in \mathrm{cycles}^{(1)}(k)} C$$\\
We can deduce a recursive computation: $$\vert \mathrm{cycles}^{(1)}(d)\vert = \frac{1}{d}(\vert E(d,n) \vert - \sum_{k\vert d, k\not = d} k\vert \mathrm{cycles}^{(1)}(k)\vert )$$
\end{proposition}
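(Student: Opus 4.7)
The plan is to interpret $E(d,n)$ in terms of the discrete dynamical system given by right-multiplication by $\sigma$ on the vertex set of $H_n$, and then to observe that this dynamical system is exactly what generates the 1-cycles.

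First, I would rewrite the defining condition of $E(d,n)$. The condition ``$\pi(i+d)=\pi(i)+k$ for all $i\in\mathbb{Z}/n\mathbb{Z}$'' is exactly the identity $\pi\sigma^d=\sigma^k\pi$ in $S_n$, and hence means $\pi\sigma^d\in\mathrm{inc}(\pi)$, i.e.\ $\mathrm{inc}(\pi\sigma^d)=\mathrm{inc}(\pi)$. So
\[
E(d,n)\;=\;\{\mathrm{inc}(\pi)\mid \mathrm{inc}(\pi\sigma^d)=\mathrm{inc}(\pi)\}.
\]

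Next, I would analyse the map $T\colon \mathrm{inc}(\pi)\mapsto \mathrm{inc}(\pi\sigma)$. It is well defined, since left-multiplication by $\sigma^j$ commutes with right-multiplication by $\sigma$, and bijective with inverse $\mathrm{inc}(\pi)\mapsto\mathrm{inc}(\pi\sigma^{-1})$. As a permutation of the finite vertex set, it partitions the vertices into disjoint orbits. I would then argue that these orbits are precisely the 1-cycles of $H_n$: the only candidate for a weight-$1$ out-neighbour of $\mathrm{inc}(\pi)$ is obtained by deleting the first column of $M(\pi)$ and appending the missing basis vector, which gives exactly $M(\pi\sigma)$; hence a 1-cycle is nothing but a $T$-orbit, with fixed points of $T$ corresponding to the length-$1$ case treated in Lemma \ref{lem4}. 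In particular, the length of the 1-cycle through $\mathrm{inc}(\pi)$ is the least $k\ge 1$ with $T^k(\mathrm{inc}(\pi))=\mathrm{inc}(\pi)$.

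Combining the two observations, $\mathrm{inc}(\pi)\in E(d,n)$ iff $T^d$ fixes $\mathrm{inc}(\pi)$ iff the length of the 1-cycle through $\mathrm{inc}(\pi)$ divides $d$. This yields the first identity as a \emph{disjoint} union, because each vertex lies on exactly one 1-cycle. Taking cardinalities and using that every $C\in\mathrm{cycles}^{(1)}(k)$ has exactly $k$ vertices gives $|E(d,n)|=\sum_{k\mid d}k\,|\mathrm{cycles}^{(1)}(k)|$, and isolating the $k=d$ summand produces the recursive formula.

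The step I expect to require the most care is the identification of $T$-orbits with the combinatorially-defined 1-cycles: one must check that right-multiplication by $\sigma$ really is the \emph{only} way to follow a weight-$1$ edge out of $\mathrm{inc}(\pi)$, so that no 1-cycle is missed and none is counted twice. Everything after that is a routine Möbius-style bookkeeping over the divisor lattice of $d$.
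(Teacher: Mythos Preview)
Your argument is correct and is essentially the paper's own proof, repackaged: both reduce membership in $E(d,n)$ to the condition $\mathrm{inc}(\pi\sigma^d)=\mathrm{inc}(\pi)$ and then identify this with the $1$-cycle through $\mathrm{inc}(\pi)$ having length dividing $d$. The paper carries this out by a double inclusion using the group morphism of Lemma~\ref{lem3}, whereas you phrase it as an orbit/period statement for the bijection $T$; the content is the same, and the step you flag as delicate (that the unique weight-$1$ out-edge is $\mathrm{inc}(\pi)\to\mathrm{inc}(\pi\sigma)$) is exactly what the paper already established in the proof of Lemma~\ref{lem3}.
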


\begin{proof}
Let $$A(d,n)=\bigcup_{k\vert d} \quad \bigcup_{C\in \mathrm{cycles}^{(1)}(k)} C$$
We want to show that $\forall n,d\in \mathbb{N}$, $E(d,n)=A(d,n)$.\\

Let $x\in A(d,n)$, $\exists k\vert d$, there exists $C\in cycles^{(1)}(k)$ such that $x\in C$. There exists also $\pi \in S_n$ such that $x=\mathrm{inc}(\pi)$. Then, since $x$ is in a 1-cycle of length $k$, we have $\mathrm{inc}(\pi\sigma^k)=\mathrm{inc}(\pi)$. It means there is $\pi'\in \mathrm{inc}(\pi)$ such that $\pi'=\pi\sigma^k$. But $\mathrm{inc}=\{\sigma^p\vert 0\leq p < n\}$. Thus, $\exists p\leq n-1$ s.t. $\pi'=\sigma^p\pi=\pi\sigma^k$. Hence for all $i\in \mathbb{Z}/n\mathbb{Z}$, $\sigma^p\pi(i)=\pi(i)+p=\pi\sigma^k(i)=\pi(i+k)$.\\
Then since $k\vert d$, we can write $d=kt$ for some $t\in \mathbb{N}$. We can see that $$\begin{tabular}{lllr}
$\pi(i+d)$&$=$&$\pi(i+k(t-1)+k)$&\\
&$=$&$\pi(i+k(t-1))+p$&\\
&$=$&$\pi(i)+tp$&\textrm{ by induction}
\end{tabular}$$
Let $r$ be the remainder of the euclidean division of $tp$ by $n$. We get that $\exists r<n$, $\forall i\in \mathbb{Z}/n\mathbb{Z}$, $\pi(i+d)=\pi(i)+r$, which means that $x\in E(d,n)$ by definition of $E(d,n)$.\\

Let $x=\mathrm{inc}(\pi)\in E(d,n)$, and $p$ the integer such that $\forall i \in \mathbb{Z}/n\mathbb{Z}$, $\pi(i+d)=\pi(i)+p$, which means that $\pi\sigma^d=\sigma^p\pi$. The 1-cycle of $x$ is $C=\{\mathrm{inc}(\pi\sigma^i)\vert i\in \mathbb{N}\}$. If we use the same group structure as in Lemma \ref{lem3} with $\mathrm{inc}(\pi)$ as the identity element, then we can define a surjective morphism $$\varphi : \begin{tabular}{lll}
$(\mathbb{Z},+)$&$\rightarrow$& $(C,\otimes)$\\
$i$ & $\mapsto$ & $\mathrm{inc}(\pi\sigma^i)$
\end{tabular}$$
The fact that $\pi\sigma^d=\sigma^p\pi$ means that $d\mathbb{Z}\subset Ker(\varphi)$, so we have a surjective morphism from $\mathbb{Z}/d\mathbb{Z}$ into $C$. Thus, $C$ is isomorphic to a quotient group of $\mathbb{Z}/d\mathbb{Z}$, hence $\vert C\vert$ divides $d$. $x$ is in a 1-cycle of length k with $k\vert d$, so $x\in A(d,n)$.
\end{proof}

Then we only need to count the elements of $E(d,n)$.
\begin{lemme}\label{lem6}
For any $d\in\mathcal{D}(n)$, if $p:=\frac{n}{d}$, we have $\vert E(d,n) \vert = \varphi(p)p^{d-1}(d-1)!$.
\end{lemme}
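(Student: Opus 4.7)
The plan is to count representatives of the equivalence classes in $E(d,n)$ by fixing $\pi(1)=1$ in each class. Since $\mathrm{inc}(\pi)=\{\sigma^m\pi\mid 0\le m<n\}$ and $(\sigma^m\pi)(1)=\pi(1)+m$, there is exactly one such representative in each class, and moreover if $\pi$ satisfies $\pi(i+d)=\pi(i)+k$ then so does every member of $\mathrm{inc}(\pi)$ with the same $k$. Thus it suffices to count permutations $\pi\in S_n$ with $\pi(1)=1$ and $\pi(i+d)\equiv\pi(i)+k\pmod n$ for some $k$.

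First I would constrain $k$. Iterating the relation gives $\pi(i+pd)=\pi(i)+pk$; but $pd=n$, so $pk\equiv 0\pmod n$, which forces $d\mid k$. Write $k=dm$ with $0\le m<p$. For each fixed $j\in\{1,\ldots,d\}$, the values $\pi(j),\pi(j+d),\ldots,\pi(j+(p-1)d)$ are $\pi(j)+idm$ for $i=0,\ldots,p-1$, and these $p$ values must be distinct modulo $n$. This is equivalent to requiring that $dm$ have order $p$ in $\mathbb{Z}/n\mathbb{Z}$, i.e.\ $\gcd(dm,n)=d$, i.e.\ $\gcd(m,p)=1$. So $k$ contributes exactly $\varphi(p)$ possibilities, and for any such $k$ the subgroup $\langle k\rangle$ is the unique order-$p$ subgroup $H:=d\mathbb{Z}/n\mathbb{Z}$.

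Next I would count, for each valid $k$, the number of permutations. Once $k$ is fixed, $\pi$ is entirely determined by the values $\pi(1),\pi(2),\ldots,\pi(d)$, and the image of the ``block'' indexed by $j$ is the coset $\pi(j)+H$. For $\pi$ to be a bijection these $d$ cosets must be the $d$ distinct cosets of $H$ in $\mathbb{Z}/n\mathbb{Z}$ (if they are distinct, surjectivity follows automatically from $|H|=p$ and $dp=n$). With $\pi(1)=1$ fixed, the coset $1+H$ is used; then $\pi(2)$ may be any of the $n-p$ elements avoiding $1+H$, then $\pi(3)$ any of $n-2p$, and so on, giving
\[
\prod_{j=1}^{d-1}(n-jp)=\prod_{j=1}^{d-1}p(d-j)=p^{\,d-1}(d-1)!.
\]
Multiplying by the $\varphi(p)$ choices for $k$ yields $|E(d,n)|=\varphi(p)\,p^{d-1}(d-1)!$ as claimed.

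The only genuinely delicate point is the equivalence ``$\pi$ is a permutation $\iff$ $\gcd(m,p)=1$ and $\pi(1),\ldots,\pi(d)$ lie in distinct cosets of $H$''; both directions must be verified carefully, but once this structural description is in hand the counting is essentially a product. As sanity checks, $d=1$ recovers $\varphi(n)=|E(1,n)|$ from Lemma \ref{lem5}, and $d=n$, $p=1$ gives $(n-1)!$, the total number of classes (indeed every $\pi$ trivially satisfies $\pi(i+n)=\pi(i)+0$).
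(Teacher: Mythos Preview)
Your proof is correct and follows essentially the same approach as the paper: fix a representative with $\pi(1)=1$, determine the admissible values of $k$ (namely $k=dm$ with $\gcd(m,p)=1$, giving $\varphi(p)$ choices), and then count the $p^{d-1}(d-1)!$ ways to choose $\pi(2),\ldots,\pi(d)$ so that the resulting map is a bijection. Your write-up is in fact slightly more careful than the paper's---you justify why each class has a unique representative with $\pi(1)=1$, you make the coset structure $\pi(j)+H$ explicit, and you include useful sanity checks at $d=1$ and $d=n$---but the argument is the same.
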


\begin{proof}
As for d=1, we are going to count permutations instead of equivalence classes by stating $\pi(1)=1$.\\
First, we must search the possible values of k :\\
We have $\forall i,j\in \mathbb{Z}/n\mathbb{Z}$, $\pi(i+jd)=\pi(i)+jk$. In particular, if j=p, $\pi(i+pd)=\pi(i)+pk=\pi(i)$ so we need $n\vert pk$ hence $d\vert k$.\\
We also need \[
  f_k\colon\begin{aligned}[t]
        \mathbb{Z}/p\mathbb{Z}&\longrightarrow \mathbb{Z}/n\mathbb{Z}\\
        j&\longmapsto 1+jk
  \end{aligned}
\]
to be injective. Let $j,j'\in \mathbb{Z}/n\mathbb{Z}$ s.t. $1+jk=1+j'k$, so $(j-j')k\equiv 0[n]$. We have $k=dk'$, so $n\vert dk'(j-j')$ means $p\vert k'(j-j')$.
\begin{itemize}
\item If $k'\wedge p$, $p\vert (j-j')k' \Longrightarrow p\vert j-j' \Longrightarrow j'=j$ in $ \mathbb{Z}/p\mathbb{Z}$
\item If $\exists t$ s.t. $p=tr_1$ and $k'=tr_2$, then $j'=0$ and $j=r_1$ yields $(j-j')k'=tr_1r_2=pr_2\equiv 0 [p]$.
\end{itemize}
Finally, k works iff $d\vert k$ and $\frac{k}{d}\wedge p=1$ which amount to choosing $k'<p$ with $k'\wedge p$ hence there are $\varphi(p)$ possibilities for k.\\

Now we must count, for a given k, the number of $\pi$ that verifies the conditions (with d=1, there was only one permutation, but not anymore). Indeed, letters are fixed by groups of p : if we know $\pi(i)$, we will know $\pi(i+d)$,...,$\pi(i+(p-1)d)$. Hence we see that we only need to fix $\pi(i)$ with $i\leq d$ to completely determine $\pi$. We considered $\pi(1)=1$. There are $n-p$ possibilities left for $\pi(2)$, then $n-2p$ for $\pi(3)$ until $n-(d-1)p$ for $\pi(d)$. For a given k, the number of permutations is $$\prod_{k=1}^{d-1}(n-pk)=\prod_{k=1}^{d-1}p(d-k)=p^{d-1}(d-1)!$$\\
Multiplied by the number of k, $\vert E(d,n) \vert = \varphi(p)p^{d-1}(d-1)!$.

\end{proof}

\begin{theorem}\label{th5}
We have he following upper bound:
$$m(n)\leq B(n) = 1+ \sum_{d\vert n}\vert \mathrm{cycles}^{(1)}(d)\vert(d+n-2)$$ 
\end{theorem}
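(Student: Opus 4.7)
The plan is to exploit the 1-cycles to build a cheap Hamiltonian path in $H_n$, and then apply Proposition \ref{prop6} to convert it into a universal word.

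First I would check that the weight-$1$ edges of $H_n$ form a disjoint union of cycles covering every vertex, and that these cycles are precisely the 1-cycles already defined. This comes directly from the computation done in the proof of Lemma \ref{lem4}: from a vertex $\mathrm{inc}(\pi)$ the unique weight-$1$ out-neighbour is $\mathrm{inc}(\pi\sigma)$, and the unique weight-$1$ in-neighbour is $\mathrm{inc}(\pi\sigma^{-1})$, so the restriction of $H_n$ to weight-$1$ edges is a functional bijective graph, hence a disjoint union of directed cycles. By Lemma \ref{lem3} every such cycle has length $d$ for some $d\mid n$, and together they account for
\[
\sum_{d\mid n} d\,|\mathrm{cycles}^{(1)}(d)| \;=\; (n-1)!
\]
vertices, i.e.\ all of them. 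Let $N=\sum_{d\mid n}|\mathrm{cycles}^{(1)}(d)|$ be the total number of 1-cycles.

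Next I would construct the path explicitly. Enumerate the $N$ 1-cycles $C_1,\dots,C_N$ in any order. Inside each $C_j$ of length $d_j$, traverse $d_j-1$ consecutive weight-$1$ edges, visiting all $d_j$ of its vertices. To pass from the last vertex of $C_j$ to some vertex of $C_{j+1}$, use any edge of $H_n$: since $H_n$ is complete and every edge weight is at most $n-1$, such a transition costs at most $n-1$. Repeating this for $j=1,\dots,N-1$ yields a path through every vertex of $H_n$ of total weight at most
\[
\sum_{d\mid n} |\mathrm{cycles}^{(1)}(d)|(d-1) \;+\; (N-1)(n-1).
\]

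Finally, by Proposition \ref{prop6} this path corresponds to a universal word for $\mathcal{R}_2$ of length equal to the path weight plus $n$, giving
\[
m(n)\;\leq\; n + \sum_{d\mid n} |\mathrm{cycles}^{(1)}(d)|(d-1) + (N-1)(n-1) \;=\; 1 + \sum_{d\mid n} |\mathrm{cycles}^{(1)}(d)|(d+n-2) \;=\; B(n),
\]
where the last equality is a direct regrouping using $N=\sum_d|\mathrm{cycles}^{(1)}(d)|$.

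The only genuinely subtle point is the structural fact that the weight-$1$ subgraph partitions the vertices into cycles, so that an edge of weight at most $n-1$ is needed at most $N-1$ times rather than once per vertex; everything else is routine bookkeeping. There is no real obstacle: the bound is tight only up to the choice of the inter-cycle transitions, and one could conceivably improve it by noting that many transitions are cheaper than $n-1$, but the statement only asserts the worst-case upper bound $B(n)$.
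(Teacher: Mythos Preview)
Your proof is correct and follows essentially the same route as the paper: traverse each 1-cycle with weight-$1$ edges and bridge consecutive 1-cycles by a single edge of weight at most $n-1$, then invoke Proposition~\ref{prop6} and regroup. You give a slightly fuller justification that the weight-$1$ edges partition the vertex set into directed cycles (uniqueness of the weight-$1$ out- and in-neighbour), which the paper leaves implicit, but the construction and the arithmetic are identical.
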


\begin{proof}
We construct Hamiltonian path $H$ the following way:
\begin{itemize}
\item Start from some vertex $x$
\item Follow weight-1 edges to reach every vertex in its 1-cycle
\item Head to the nearest 1-cycle with an edge of weight at most n-1
\item Repeat until reaching every vertex.
\end{itemize}

We can now show that the weight of $H$ is smaller than a function of $n$: each 1-cycle of cardinal $d$ requires $d-1$ weight-1 edges, and each 1-cycle other than the first one requires an edge of weight at most $n-1$. Hence $$w(H)\leq \sum_{d\vert n} \vert \mathrm{cycles}^{(1)}(d)\vert(d-1) + (\sum_{d\vert n}\vert \mathrm{cycles}^{(1)}(d)\vert -1)(n-1)\leq  \sum_{d\vert n} \vert \mathrm{cycles}^{(1)}(d)\vert (d+n-2) -(n-1) $$\\
In order to obtain a universal word, we must add the $n$ letters of the first permutation, which gives the result we wanted.
\end{proof}

\begin{theorem}\label{th6}
We also get a lower bound : $$m(n)\geq C(n)=(n-1)!+n-1+\sum_{d\vert n}\vert \mathrm{cycles}^{(1)}(d)\vert$$
\end{theorem}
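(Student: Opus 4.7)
The plan is to use the bijection of Proposition 6 to recast the statement as a lower bound on the weight of paths in $H_n$. Since $m(n) = n + \min_H w(H)$ over paths $H$ visiting every vertex, it suffices to prove
\[ w(H) \geq (n-1)! - 1 + k, \qquad k := \sum_{d\mid n} \vert\mathrm{cycles}^{(1)}(d)\vert.\]

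The crucial structural input is that a weight-1 edge of $H_n$ is always of the form $\mathrm{inc}(\pi) \to \mathrm{inc}(\pi\sigma)$, so by Lemma 3 the weight-1 edges partition the $(n-1)!$ vertices into $k$ vertex-disjoint directed cycles whose lengths divide $n$; a 1-cycle of length $d$ contributes exactly $d$ weight-1 edges. I would first argue that $H$ may be taken to be a simple Hamiltonian path, because revisiting a vertex never decreases the total weight. Such a simple path cannot close any 1-cycle, so it uses at most $d-1$ of the $d$ weight-1 edges of a 1-cycle of length $d$. Summing over all 1-cycles,
\[ E_1 := \#\{\text{weight-1 edges in }H\} \leq \sum_{d\mid n}\vert\mathrm{cycles}^{(1)}(d)\vert(d-1) = (n-1)! - k.\]
Since $H$ has $(n-1)! - 1$ edges in total, the number $E_{\geq 2}$ of edges of weight at least $2$ satisfies $E_{\geq 2} \geq k-1$, and each such edge contributes at least $2$ to the weight, giving
\[ w(H) \geq E_1 + 2E_{\geq 2} = (n-1)! - 1 + E_{\geq 2} \geq (n-1)! + k - 2.\]

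The main obstacle is recovering the final $+1$ needed to reach $C(n)$. To close this gap, I would analyse the extremal case $E_{\geq 2} = k-1$, in which $H$ traverses each 1-cycle as a single contiguous weight-1 arc and the $k-1$ inter-arc edges all have weight exactly $2$. A weight-$2$ edge from $\mathrm{inc}(\pi)$ to $\mathrm{inc}(\pi')$ encodes the constraint that $M(\pi')$ is obtained from $M(\pi)$ by dropping the first two columns and appending two new ones, so it rigidly prescribes the exit-vertex of the source 1-cycle and the entry-vertex of the target 1-cycle. The plan is then to list, for each ordered pair $(C,C')$ of 1-cycles, exactly which (exit, entry) pairs support a weight-$2$ edge, and argue by a pigeonhole-style count on the arc endpoints that no sequence of $k$ weight-1 arcs linked only by weight-$2$ edges can be consistent with a simple path visiting every vertex exactly once. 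Whenever this consistency fails, some edge of $H$ must have weight at least $3$, contributing the missing $+1$. Making this structural step work uniformly in $n$ is the technical core of the proof; once it is in place, the bound $w(H) \geq (n-1)! + k - 1$ follows and completes the argument.
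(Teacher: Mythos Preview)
Your first half is correct and matches the paper's argument: both show $w(H)\geq (n-1)!+k-2$ for any path $H$ visiting every vertex, where $k=\sum_{d\mid n}\lvert\mathrm{cycles}^{(1)}(d)\rvert$. The paper does this by a short induction on the length of $\mathcal{C}$, proving $w(\mathcal{C})\geq p(\mathcal{C})+c(\mathcal{C})-2$ with $p$ the number of distinct vertices and $c$ the number of distinct 1-cycles visited; you instead reduce to a simple Hamiltonian path and count weight-1 edges directly. The two are equivalent. (Your reduction to simple paths tacitly uses the triangle inequality for the weight function, which does hold here but deserves a line of justification; the paper's inductive phrasing sidesteps this.)

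The second half, however, is chasing a phantom. The constant in the theorem statement is a typo: the paper's own proof ends with
\[
\lvert u\rvert \;\geq\; (n-1)!+n-2+\sum_{d\mid n}\lvert\mathrm{cycles}^{(1)}(d)\rvert,
\]
i.e.\ with $n-2$, not $n-1$, and the table of values in Section~3.2 confirms this (for $n=3$ one has $k=\varphi(3)=2$ and $C(3)=2!+1+2=5$, not $6$; for $n=4$, $k=3$ and $C(4)=3!+2+3=11$). So the bound you already obtained, $m(n)\geq (n-1)!+n-2+k$, \emph{is} the intended $C(n)$, and no extra $+1$ is needed. In fact the strengthened inequality you are trying to salvage is false: for $n=3$ it would give $m(3)\geq 6$, yet $m(3)=5$. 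Your proposed pigeonhole analysis of weight-$2$ transitions therefore cannot succeed in general, and you should simply drop that part of the argument.
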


\begin{proof}
Let $\mathcal{C}=(x_1,...,x_m)$ be a path, $p(\mathcal{C})$ the number of distinct vertices visited by $\mathcal{C}$ and $c(\mathcal{C})$ the number of distinct 1-cycles visited.\\
We will show the inequality $w(\mathcal{C})\geq p(\mathcal{C})+c(\mathcal{C})-2$ by induction on $m$.
\begin{itemize}
\item $m=1$: $w=0$, $p=1$ and $c=1$ so we have the inequality
\item Let $\mathcal{C'}=(x_1,...,x_{m+1})$. Suppose that $\mathcal{C}=(x_1,...,x_m)$ verifies the inequality.
\begin{itemize}
\item If $w(x_m,x_{m+1})\geq 2$, then $p$ and $c$ can only be increased by one so the inequality holds
\item Si $w(x_m,x_{m+1}) = 1$, we stay in the same 1-cycle so $c$ does not increase, hence the inequality holds
\end{itemize}
But if $\mathcal{C}$ goes through every vertex, $p(\mathcal{C})=(n-1)!$ and $c(\mathcal{C})=\sum_{d\vert n}\mathrm{cycles}^{(1)}(d)$.\\ $w(\mathcal{C})\geq (n-1)!-2+\sum_{d\vert n}\vert \mathrm{cycles}^{(1)}(d)\vert$ and then for every universal word (by Prop. \ref{prop7}), we get $\vert u \vert \geq (n-1)!+n-2+\sum_{d\vert n}\vert \mathrm{cycles}^{(1)}(d)\vert$.
\end{itemize}
\end{proof}

\subsection{Tightness of the Bounds}
Let us take a look at the first values of n:
\begin{center}
\begin{tabular}{|c|c|c|c|c|c|}
\hline
n & I(n) & C(n) & Best found & B(n) & S(n)\\
\hline
1&1&1&1&1&1\\
2&2&2&2&3&2\\
3&4&5&5&5&5\\
4&9&11&12&13&19\\
5&28&35&39&49&97\\
6&125&148&164&217&601\\
7&726&823&915&1261&4321\\
8&5047&5686&6118&8881&35280\\

\hline
\end{tabular}
\captionof{figure}{Table of the values of the bounds for $n\leq 8$. ''Best found`` is the minimal size found numerically for $m(n)$. It is optimal at least for $n\leq 4$.}
\end{center}
We saw in Proposition \ref{prop7} that the ratio between trivial upper and lower bounds was equivalent to $n$. In order to show that we improved significantly the bounds, we would like to show that the limit of the ratio between the new upper bound and a lower bound when $n$ approaches infinity is a constant.

\begin{proposition}\label{prop9}
If n is prime, the upper bound can be rewritten: $$\begin{tabular}{lll}

$B(n)$& $=$& $1 + (n-1)\vert \mathrm{cycles}^{(1)}(1)\vert   +(2n-2)\vert \mathrm{cycles}^{(1)}(n)\vert$\\
 &$=$ &$1+(n-1)^2+2(n-1)[\frac{(n-1)!-(n-1}{n}]$\\
 &$=$&$1+(n-1)^2(2\frac{(n-2)!-1}{n}+1)$
\end{tabular}$$

If we denote by $p_n$ the $n^{th}$ prime number, then $$\frac{B(p_n)}{(p_n-1)!} \xrightarrow[n\rightarrow +\infty]{} 2$$
\end{proposition}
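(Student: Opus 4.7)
The proof splits naturally into two parts: an algebraic simplification of $B(n)$ for prime $n$, and then a standard asymptotic computation.

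For the first part, the plan is simply to substitute. When $n$ is prime, $\mathcal{D}(n)=\{1,n\}$, so the sum in Theorem \ref{th5} has only two terms, corresponding to 1-cycles of length $1$ and of length $n$. By Remark \ref{r5} we have exactly $\vert \mathrm{cycles}^{(1)}(1)\vert = n-1$ and $\vert \mathrm{cycles}^{(1)}(n)\vert = \frac{(n-1)!-(n-1)}{n}$. Plugging these into
\[
B(n) = 1 + \vert \mathrm{cycles}^{(1)}(1)\vert (1+n-2) + \vert \mathrm{cycles}^{(1)}(n)\vert (n+n-2)
\]
gives the first line, and then factoring $(n-1)^2$ out of $(n-1)^2 + 2(n-1)\cdot\frac{(n-1)!-(n-1)}{n}$ using $(n-1)! - (n-1) = (n-1)\bigl((n-2)! - 1\bigr)$ produces the last two forms. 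This step is purely mechanical.

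For the limit, I would start from the last rewriting and divide by $(p_n-1)!$:
\[
\frac{B(p_n)}{(p_n-1)!} = \frac{1}{(p_n-1)!} + \frac{(p_n-1)^2}{(p_n-1)!} + \frac{2(p_n-1)^2\bigl((p_n-2)!-1\bigr)}{p_n\,(p_n-1)!}.
\]
The first two terms tend to $0$ since $(p_n-1)!$ grows super-exponentially while the numerators are polynomial in $p_n$. In the third term, I would use $(p_n-1)! = (p_n-1)(p_n-2)!$ to simplify:
\[
\frac{2(p_n-1)^2(p_n-2)!}{p_n(p_n-1)(p_n-2)!} = \frac{2(p_n-1)}{p_n} \longrightarrow 2,
\]
while the correction $-\frac{2(p_n-1)^2}{p_n(p_n-1)!} = -\frac{2(p_n-1)}{p_n\,(p_n-2)!}$ also tends to $0$. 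Combining, the limit equals $2$.

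The only subtlety is bookkeeping: one must be careful that $p_n \to \infty$ (so that $(p_n-2)! \to \infty$ in the denominators of the vanishing terms) and that no cancellations are missed when splitting the $(n-2)!-1$ factor. There is no deep obstacle; the proposition is essentially a direct consequence of Remark \ref{r5} together with the observation that, for prime $n$, all but two terms of the sum defining $B(n)$ vanish.
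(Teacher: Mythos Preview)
Your proposal is correct and matches the paper's implicit argument: the paper does not give a separate proof of this proposition, since the rewriting follows immediately from Theorem~\ref{th5} together with Remark~\ref{r5} (only $d=1$ and $d=n$ occur for prime $n$), and the limit is a straightforward asymptotic computation from the closed form. Your write-up simply spells out these steps explicitly and accurately.
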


We note $u_n=\frac{B(n)}{(n-1)!}$, the is a subsequence of $u_n$ whose limit is $2$. Using a computer, it seems that $u_n$ grows towards $2$ after $n=6$. If we can indeed prove that $u_n$ is increasing, we will get the limit we want since $u_n$ converges towards $l\in \bar{\mathbb{R}}$, but $l$ has to be $2$, the limit of the subsequence. We will even get $u_n\leq 2(n-1)!$ for $n\geq 6$.\\
Unfortunately, I could not prove this result, but I was able to prove the limit of the ratio using a second bound $B'(n)$, more explicit but worse, and show that $\frac{B'(n)}{(n-1)!}\xrightarrow[n\rightarrow \infty]{}2$.

\begin{proposition}\label{prop10}
Let $$B'(n)=1+(n-1)!+\left(\sum_{d\vert n}\varphi(\frac{n}{d})\frac{n^{d-1}(d-1)!}{d^d}\right)(n-2)$$
We have $m(n)\leq B(n) \leq B'(n)$.
\end{proposition}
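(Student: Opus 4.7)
The left inequality $m(n)\leq B(n)$ is exactly Theorem \ref{th5}, so the only real work is to establish $B(n)\leq B'(n)$. The plan is to reorganise $B(n)$ into a clean constant-plus-sum form and then bound each $|\mathrm{cycles}^{(1)}(d)|$ individually using the results of the previous subsection.

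First I would split the sum defining $B(n)$ into two pieces:
$$B(n)=1+\sum_{d\vert n}|\mathrm{cycles}^{(1)}(d)|\cdot d+(n-2)\sum_{d\vert n}|\mathrm{cycles}^{(1)}(d)|.$$
The key observation is that the $1$-cycles partition the vertex set of $H_n$: starting from any $\mathrm{inc}(\pi)$ and iterating the (unique) weight-$1$ successor $\mathrm{inc}(\pi)\mapsto \mathrm{inc}(\pi\sigma)$ produces exactly one $1$-cycle containing that vertex. Since $H_n$ has $(n-1)!$ vertices, this partition yields
$$\sum_{d\vert n}d\cdot|\mathrm{cycles}^{(1)}(d)|=(n-1)!,$$
so
$$B(n)=1+(n-1)!+(n-2)\sum_{d\vert n}|\mathrm{cycles}^{(1)}(d)|.$$

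Next I would bound each $|\mathrm{cycles}^{(1)}(d)|$. The recursive formula from Proposition \ref{prop8} can be rewritten as
$$d\,|\mathrm{cycles}^{(1)}(d)|=|E(d,n)|-\sum_{k\vert d,\,k<d}k\,|\mathrm{cycles}^{(1)}(k)|\leq |E(d,n)|,$$
since all the terms subtracted are nonnegative. Combined with Lemma \ref{lem6} (setting $p=n/d$), this gives the explicit bound
$$|\mathrm{cycles}^{(1)}(d)|\leq \frac{|E(d,n)|}{d}=\frac{\varphi(n/d)\,(n/d)^{d-1}(d-1)!}{d}=\varphi\!\left(\tfrac{n}{d}\right)\frac{n^{d-1}(d-1)!}{d^d}.$$

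Plugging this into the reorganised expression for $B(n)$ yields
$$B(n)\leq 1+(n-1)!+(n-2)\sum_{d\vert n}\varphi\!\left(\tfrac{n}{d}\right)\frac{n^{d-1}(d-1)!}{d^d}=B'(n),$$
which is exactly the claimed inequality. There is no genuine obstacle here: the proof is essentially a bookkeeping rearrangement, whose only nontrivial ingredients (the identity $\sum_{d\vert n}d\,|\mathrm{cycles}^{(1)}(d)|=(n-1)!$ and the bound $d\,|\mathrm{cycles}^{(1)}(d)|\leq |E(d,n)|$) both follow immediately from material already established. The interest of $B'(n)$ is of course that, unlike $B(n)$, it is written entirely in closed form, which is what makes it suitable for the asymptotic analysis of $B'(n)/(n-1)!$ carried out next.
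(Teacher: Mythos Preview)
Your proof is correct and follows essentially the same route as the paper: the same splitting of $B(n)$, the same identity $\sum_{d\mid n}d\,|\mathrm{cycles}^{(1)}(d)|=(n-1)!$, and the same bound $|\mathrm{cycles}^{(1)}(d)|\leq |E(d,n)|/d$ combined with Lemma~\ref{lem6}. The only cosmetic difference is that the paper obtains the identity by recognising the first sum as $|E(n,n)|$ and invoking Lemma~\ref{lem6} with $d=n$, whereas you argue it directly from the fact that the $1$-cycles partition the $(n-1)!$ vertices of $H_n$; these are two phrasings of the same observation.
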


\begin{proof}
Remember that $$B(n)=1+\sum_{d\vert n}\vert \mathrm{cycles}^{(1)}(d)\vert(d+n-2)$$ \textrm{ and } $$E(d,n)=\sum_{k\vert d}\vert \mathrm{cycles}^{(1)}(k)\vert k=\varphi(\frac{n}{d})(\frac{n}{d})^{d-1}(d-1)!$$

We see that $\vert \mathrm{cycles}^{(1)}(d)\vert \leq \frac{E(d,n)}{d}$. 
Then we can write
$$\begin{tabular}{ccc}
     $B(n)$ & $=$ & $1+\sum_{d\vert n}\vert \mathrm{cycles}^{(1)}(d)\vert d+ (\sum_{d\vert n}\vert 1-\mathrm{cycles}(d)\vert)(n-2)$ \\
     & $=$ & $1+E(n,n)+(\sum_{d\vert n}\vert \mathrm{cycles}^{(1)}(d)\vert)(n-2)$\\
     & $=$ & $1+(n-1)!+(\sum_{d\vert n}\vert \mathrm{cycles}^{(1)}(d)\vert)(n-2)$\\
     & $\leq$ & $1+(n-1)!+(\sum_{d\vert n}\frac{E(d,n)}{d})(n-2)$\\
     & $\leq$ & $1+(n-1)!+(\sum_{d\vert n}\varphi(\frac{n}{d})\frac{n^{d-1}(d-1)!}{d^d})(n-2)$
\end{tabular}$$
\end{proof}

\begin{scholia}\label{s1}
We can write $B(n)=1+(n-1)!+(\sum_{d\vert n}\vert \mathrm{cycles}^{(1)}(d)\vert)(n-2)$ which is better for understanding, and can be compared with the lower bound.
\end{scholia}

\begin{theorem}\label{th7}
$$\frac{B'(n)}{(n-1)!}\xrightarrow[n\rightarrow \infty]{}2$$
\end{theorem}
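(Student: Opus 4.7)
The plan is to isolate the contribution of the ``diagonal'' divisor $d = n$ in the sum defining $B'(n)$, and to show that all remaining terms are negligible compared with $(n-1)!$.

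Set $T_d := \varphi(n/d)\, n^{d-1}(d-1)!/d^d$, so that $B'(n) = 1 + (n-1)! + (n-2)\sum_{d\mid n} T_d$. For $d = n$, $T_n = \varphi(1) \cdot n^{n-1}(n-1)!/n^n = (n-1)!/n$, and splitting off this term gives
\[
\frac{B'(n)}{(n-1)!} \;=\; \frac{1}{(n-1)!} \;+\; 1 \;+\; \frac{n-2}{n} \;+\; \frac{n-2}{(n-1)!}\sum_{\substack{d\mid n\\ d<n}} T_d.
\]
The first three summands converge to $0$, $1$, and $1$ respectively, so the target limit $2$ follows once we prove that the last sum tends to $0$.

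For the bound, any proper divisor $d$ of $n$ satisfies $d \leq n/2$. Writing $(n-1)!/(d-1)! = \prod_{k=d}^{n-1} k$ (a product of $n-d$ factors each at least $d$, hence at least $d^{n-d}$) and using $\varphi(n/d) \leq n/d$ gives
\[
\frac{T_d}{(n-1)!} \;=\; \frac{\varphi(n/d)\, n^{d-1}}{d^d \prod_{k=d}^{n-1} k} \;\leq\; \frac{n^d}{d^{n+1}}.
\]
The case $d=1$ contributes at most $(n-2)\cdot n/(n-1)! \to 0$. For the range $2 \leq d \leq n/2$, consider $f(d) := d \log n - (n+1)\log d$. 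Since $f''(d) = (n+1)/d^2 > 0$, $f$ is convex on this interval, so its maximum is attained at an endpoint. A direct comparison shows $f(2) > f(n/2)$ for all sufficiently large $n$, hence $n^d/d^{n+1} \leq n^2/2^{n+1}$ uniformly for divisors $d$ in this range. Using the crude bound $\tau(n) \leq n$ on the divisor count, the total contribution of these terms is $O(n^3/2^n) \to 0$. Multiplying by $(n-2)$ preserves the convergence to $0$.

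The main technical point is the convexity/endpoint step: the critical value $d^\ast = (n+1)/\log n$ sits strictly in the interior of $[2,\,n/2]$, so we cannot simply plug in a ``worst'' $d$, and must instead compare the two boundary values. Beyond that, all estimates are elementary (Euler-totient bound, geometric-product bound, trivial bound on $\tau(n)$), and assembling them yields $B'(n)/(n-1)! \to 2$.
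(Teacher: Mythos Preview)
Your proof is correct and follows the same outline as the paper's: split off the $d=n$ term (which supplies the extra $1$ in the limit) and show that the proper divisors contribute $o(1)$ after multiplication by $n-2$. The difference is in the tail estimate. The paper bounds each term via
\[
\frac{d!}{n!}=\frac{1}{(n-d)!}\prod_{i=1}^{d}\frac{i}{n-d+i}\le\frac{1}{(n-d)!}\Bigl(\frac{d}{n}\Bigr)^{d},
\]
which yields $T_d/(n-2)!\le n(n-1)/\bigl(d^{2}(n-d)!\bigr)$ and then simply uses $(n-d)!\ge\lfloor n/2\rfloor!$; this gives factorial decay immediately, with no case distinction. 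You instead use the cruder geometric bound $\prod_{k=d}^{n-1}k\ge d^{\,n-d}$, arriving at $T_d/(n-1)!\le n^{d}/d^{\,n+1}$, which then forces the convexity/endpoint argument to control the intermediate $d$ and a separate treatment of $d=1$. Both routes work; the paper's product inequality is sharper and slightly cleaner (no ``for $n$ large enough, $f(2)>f(n/2)$'' check), while your approach trades a tighter inequality for a more routine calculus step.
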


\begin{proof}
$$\frac{B'(n)}{(n-1)!}=\frac{1}{(n-1)!}+1+\left(\sum_{d\vert n}\varphi\left(\frac{n}{d}\right)\frac{n^{d-1}(d-1)!}{d^d(n-2)!}\right)\frac{n-2}{n-1}$$
It is sufficient to show that $$\sum_{d\vert n}\varphi\left(\frac{n}{d}\right)\frac{n^{d-1}(d-1)!}{d^d(n-2)!}=\sum_{d\vert n,d\not = n}\varphi\left(\frac{n}{d}\right)\frac{n^{d-1}(d-1)!}{d^d(n-2)!}+\frac{(n-1)!}{n(n-2)!}\xrightarrow[n\rightarrow \infty]{}1$$
hence that $$v_n=\sum_{d\vert n,d\not = n}\varphi\left(\frac{n}{d}\right)\frac{n^{d-1}(d-1)!}{d^d(n-2)!}\xrightarrow[n\rightarrow \infty]{}0$$
Consider any term in this sum. $$\varphi\left(\frac{n}{d}\right)\frac{n^{d-1}(d-1)!}{d^d(n-2)!}\leq \left(\frac{n}{d}\right)^d\frac{n(n-1)}{d^2}\frac{d!}{n!}$$
$$\frac{d!}{n!} = \frac{1}{(n-d)!}\prod_{i=1}^d\frac{i}{n-d+i}\leq \frac{1}{(n-d)!}\left(\frac{d}{n}\right)^d$$
since $$\begin{tabular}{ccc}
     $i \leq d$ & $\Longleftrightarrow$ & $(n-d)i\leq (n-d)d $ \\
     & $\Longleftrightarrow$ & $ni\leq di+nd+d^2$\\
     & $\Longleftrightarrow$ & $\frac{i}{n-d+i}\leq \frac{d}{n}$
\end{tabular}$$

then $$\varphi\left(\frac{n}{d}\right)\frac{n^{d-1}(d-1)!}{d^d(n-2)!}\leq \frac{n(n-1)}{d^2(n-d)!}$$ with $d\vert n$ and $d\not = n$ so $d\leq \frac{n}{2}$, which means $(n-d)!\geq \left(\lfloor \frac{n}{2} \rfloor \right)!$ hence $$\varphi\left(\frac{n}{d}\right)\frac{n^{d-1}(d-1)!}{d^d(n-2)!}\leq \frac{n(n-1)}{d^2\left(\lfloor \frac{n}{2} \rfloor \right)!}$$
Summing over the values of d, $$v_n=\sum_{d\vert n,d\not = n}\varphi\left(\frac{n}{d}\right)\frac{n^{d-1}(d-1)!}{d^d(n-2)!}\leq \sum_{d\vert n,d\not = n} \frac{n(n-1)}{d^2\left(\lfloor \frac{n}{2} \rfloor \right)!}\leq \frac{n^3}{\left(\lfloor \frac{n}{2} \rfloor \right)!} \xrightarrow[n\rightarrow \infty]{}0$$
\end{proof}

\begin{lemme}\label{lem7}
Let $L(n)=\sum_{d\vert n}\vert \mathrm{cycles}^{(1)}(d)\vert$. Then $\frac{L(n)}{(n-2)!}\xrightarrow[n\rightarrow \infty]{} 1$
\end{lemme}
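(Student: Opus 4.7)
The plan is to isolate, inside the sum defining $L(n)$, the single term corresponding to the divisor $d=n$. The heuristic is that $|\mathrm{cycles}^{(1)}(n)|$ contributes essentially $(n-2)!$ (because $E(n,n)=(n-1)!$ by Lemma \ref{lem6}, and one divides by $n$ up to lower-order corrections), while every proper-divisor term $|\mathrm{cycles}^{(1)}(d)|$ with $d\mid n$, $d<n$, is of much smaller order — the factorials $(d-1)!$ involved cannot exceed $(\lfloor n/2\rfloor -1)!$, which is negligible compared to $(n-2)!$. All the necessary bounds are already available from the proof of Theorem \ref{th7}.

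First, I would bound the contribution of the proper divisors. By Proposition \ref{prop8}, $E(d,n) = \sum_{k\mid d} k\,|\mathrm{cycles}^{(1)}(k)| \geq d\,|\mathrm{cycles}^{(1)}(d)|$, so $|\mathrm{cycles}^{(1)}(d)|\leq E(d,n)/d$. Reusing the estimate established inside the proof of Theorem \ref{th7}, for every $d\mid n$ with $d<n$ (hence $d\leq n/2$),
$$\frac{E(d,n)}{d(n-2)!}=\varphi\!\left(\tfrac{n}{d}\right)\frac{n^{d-1}(d-1)!}{d^{d}(n-2)!}\leq \frac{n(n-1)}{d^{2}\,\lfloor n/2\rfloor!}.$$
Summing over proper divisors gives an upper bound of order $n^{3}/\lfloor n/2\rfloor!$, which tends to $0$. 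Hence $\sum_{d\mid n,\,d<n}|\mathrm{cycles}^{(1)}(d)| = o((n-2)!)$.

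Next, I would evaluate the dominant term $|\mathrm{cycles}^{(1)}(n)|$. Applying the recursive formula of Proposition \ref{prop8} at $d=n$ and Lemma \ref{lem6} (which gives $E(n,n)=(n-1)!$),
$$n\,|\mathrm{cycles}^{(1)}(n)|=(n-1)! - \sum_{k\mid n,\,k<n} k\,|\mathrm{cycles}^{(1)}(k)|.$$
The leading contribution $(n-1)!/n = \frac{n-1}{n}(n-2)!$ is asymptotic to $(n-2)!$. The subtracted sum is bounded by $\sum_{k\mid n,\,k<n} E(k,n)$, which by the estimate of the first step is $o(n(n-2)!)$, so after dividing by $n$ it remains $o((n-2)!)$. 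Consequently $|\mathrm{cycles}^{(1)}(n)|/(n-2)!\to 1$, and adding the proper-divisor remainder proved in the first step yields $L(n)/(n-2)!\to 1$.

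No step presents a real obstacle: the factorial cancellations have essentially been carried out already in the proof of Theorem \ref{th7}, and the only new observation is that the leading term $E(n,n)/n = (n-1)!/n$ matches the scale $(n-2)!$ exactly. The remaining work is a bit of bookkeeping to separate the $d=n$ term from the rest and to verify that the same bound $n^{3}/\lfloor n/2\rfloor!$ controls both the proper-divisor part of $L(n)$ and the correction inside $|\mathrm{cycles}^{(1)}(n)|$.
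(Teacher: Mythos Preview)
Your argument is correct, but the paper proves the lemma more economically via a squeeze. For the lower bound it simply observes that the $(n-1)!$ vertices of $H_n$ are partitioned into 1-cycles, each of length at most $n$, so $L(n)\geq (n-1)!/n$ and hence $L(n)/(n-2)!\geq (n-1)/n\to 1$. For the upper bound it uses exactly the inequality you quote, $L(n)\leq \sum_{d\mid n}E(d,n)/d$, together with the limit already computed in Theorem~\ref{th7}, to get $L(n)/(n-2)!\leq \sum_{d\mid n}\varphi(n/d)\,n^{d-1}(d-1)!/(d^{d}(n-2)!)\to 1$. Your route isolates the $d=n$ term and evaluates $|\mathrm{cycles}^{(1)}(n)|$ through the recursion of Proposition~\ref{prop8}; this is longer but yields the finer statement $|\mathrm{cycles}^{(1)}(n)|/(n-2)!\to 1$, which the paper's squeeze does not give directly. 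The key analytic input---the $n^{3}/\lfloor n/2\rfloor!$ bound on the proper-divisor contribution---is the same in both proofs; the paper just avoids re-deriving the lower bound by invoking the trivial partition count.
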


\begin{proof}
The worst and best cases are respectively: only 1-cycles of length 1 and only 1-cycles of length n, which gives us $\frac{(n-1)!}{n}\leq L(n) \leq (n-1)!$ so $\frac{(n-1)}{n}\leq \frac{L(n)}{(n-2)!} \leq (n-1)$.\\
On the left, the limit is 1 but on the right it is $\infty$. However, we see as a scholium of Theorem \ref{th7} and Proposition \ref{prop10} that $$\dfrac{L(n)}{(n-2)!}\leq \sum_{d\vert n}\varphi\left(\frac{n}{d}\right)\frac{n^{d-1}(d-1)!}{d^d(n-2)!}\xrightarrow[n\rightarrow \infty]{}1$$
The squeeze theorem gives the result.
\end{proof}

\begin{theorem}\label{th8}
$$\frac{B(n)}{(n-1)!}\xrightarrow[n\rightarrow \infty]{}2$$
\end{theorem}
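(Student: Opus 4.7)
The proof is a short computation once one combines Scholium \ref{s1} with Lemma \ref{lem7}, so the plan is essentially to rearrange $B(n)$ into a form where every term is individually easy to handle.

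First I would start from the reformulation given in Scholium \ref{s1}:
\[
B(n)=1+(n-1)!+L(n)(n-2),
\]
where $L(n)=\sum_{d\mid n}|\mathrm{cycles}^{(1)}(d)|$. Dividing by $(n-1)!$ gives
\[
\frac{B(n)}{(n-1)!}=\frac{1}{(n-1)!}+1+\frac{L(n)(n-2)}{(n-1)!}=\frac{1}{(n-1)!}+1+\frac{L(n)}{(n-2)!}\cdot\frac{n-2}{n-1}.
\]
This splits the quantity of interest into three pieces whose limits are transparent.

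Next I would evaluate each piece as $n\to\infty$. The first term $\tfrac{1}{(n-1)!}$ clearly tends to $0$. The factor $\tfrac{n-2}{n-1}$ tends to $1$. The remaining factor $\tfrac{L(n)}{(n-2)!}$ tends to $1$ by Lemma \ref{lem7}. Putting these together yields
\[
\lim_{n\to\infty}\frac{B(n)}{(n-1)!}=0+1+1\cdot 1=2,
\]
which is the desired statement.

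There is really no substantial obstacle here, since the nontrivial asymptotic work has already been done: the key step, namely $L(n)/(n-2)!\to 1$, was established in Lemma \ref{lem7} via the squeeze theorem using the upper bound coming from $B'(n)$ and the trivial lower bound $L(n)\ge (n-1)!/n$. The only thing to be careful about is to use the correct reformulation of $B(n)$ (the one from the scholium, not the version with the $(d+n-2)$ weights), so that $L(n)$ appears cleanly as a factor; everything else is elementary algebra and standard limit manipulations.
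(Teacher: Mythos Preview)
Your proof is correct and follows exactly the paper's own argument: rewrite $B(n)$ via Scholium \ref{s1}, divide by $(n-1)!$, and invoke Lemma \ref{lem7} for the $L(n)/(n-2)!$ term. The extra factorisation $\tfrac{L(n)(n-2)}{(n-1)!}=\tfrac{L(n)}{(n-2)!}\cdot\tfrac{n-2}{n-1}$ is a harmless clarification of the same step.
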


\begin{proof}
Scholium \ref{s1} says that $\frac{B(n)}{(n-1)!}=1+\frac{1}{(n-1)!}+\frac{L(n)(n-2)}{(n-1)!}$. The previous lemma gives the limit we want.
\end{proof}

\section{Conclusion}
In this article, we gave a definition of a new problem on matrices, close to superpermutations. By narrowing this problem, we were able to come back to finding a universal word instead of a matrix, and then use tools on transition graph.\par
We managed to find upper and lower bounds: algorithmically, we can construct a path verifying the upper bound simply by visiting the nearest neighbour at each step, but of course creating the graph requires a lot of time.\par
Also, the bound itself is hard to compute because it requires recursive computation. But we were able to find a more explicit bound, worse, but easier to compute. We even showed that both bounds were equivalent to $2I(n)$.\par

Here are some open questions:
\begin{itemize}
\item Is the word of minimal length unique up to relabelling ?
\item Can we prove that $u_n$ is increasing ?
\item Can we find better bounds by looking at weight-2 edges ? It seems harder, because unlike weight-1 edges, weight-2 edges are not unique.
\item Can we found bounds such that the ratio between the upper bound and the lower bound approaches 1, just like superpermutations ?
\item Is there a link between the minimal length of a superpermutation and $m(n)$ ?
\item Are there bounds for other problems on superpermutation matrices ? For example, Problem \ref{pb3} is about square matrices, but I also wondered if it was possible to minimize the area or the density of 1.
\end{itemize}

\section*{Acknowledgments}
I wish to thank Ion Nechita, who supervised my internship and helped me a lot to process my ideas, as well as the LPT Toulouse that welcomed me. I also thank Baptiste Boisan who helped me for a calculation.

\nocite{*}
\bibliographystyle{plain}
\bibliography{bibli}

\end{document}